\newcommand{\vvol}{\mathrm{V}}
\newcommand{\sd}{s_{\delta}}
\newcommand{\cd}{c_{\delta}}
\newcommand{\lgra}{\longrightarrow}
\newcommand{\iid}{\mathrm{Id}\,}
\newcommand{\ddiv}{\mathrm{div}}
\newcommand{\trace}{\mathrm{tr\,}}
\newcommand{\Ss}{\mathbb{S}}
\newcommand{\R}{\mathbb{R}}
\newtheorem{example}{Exemples}[section]
\newtheorem{thm}{Theorem}[section]
\newtheorem{lemma}[thm]{Lemma}
\newtheorem{prop}[thm]{Proposition}
\newtheorem{remark}[thm]{Remark}
\newtheorem{remarks}[thm]{Remarks}
\newtheorem{definition}[thm]{Definition}
\newtheorem{notation}[thm]{Notation}
\newtheorem{exabout:ample}[thm]{Example}
\newcommand{\beqt}{\begin{equation}}  \newcommand{\eeqt}{\end{equation}}
\newcommand{\bal}{\begin{align}}      \newcommand{\eal}{\end{align}}
\newcommand{\ba}{\begin{array}}      \newcommand{\ea}{\end{array}}
\newcommand{\bc}{\begin{center}}     \newcommand{\ec}{\end{center}}
\newcommand{\be}{\begin{enumerate}}  \newcommand{\ee}{\end{enumerate}}
\newcommand{\beq}{\begin{eqnarray}}  \newcommand{\eeq}{\end{eqnarray}}
\newcommand{\beQ}{\begin{eqnarray*}} \newcommand{\eeQ}{\end{eqnarray*}}
\newcommand{\bi}{\begin{itemize}}    \newcommand{\ei}{\end{itemize}}
\newcommand{\bt}{\begin{tabular}}    \newcommand{\et}{\end{tabular}}
\author[F. Manfio]{Fernando Manfio}
\address{Institute of Mathematics and Computer Science \newline
\indent University of S\~ao Paulo, S\~ao Carlos, Brazil}
\email{manfio@icmc.usp.br}
\thanks{The first author is supported by Fapesp, grant \textbf{2019/23370-4} and the third author gratefully acknowledges the financial support from the Indian Institute of Technology Goa through Start-up grant \textbf{2021/SG/AU/043}.}
\author[J. Roth]{Julien Roth}
\address{Laboratoire d'Analyse et de Math\'ematiques Appliqu\'ees, \newline
\indent UPEM-UPEC, CNRS, F-77454 Marne-la-Vall\'ee, France}
\email{julien.roth@u-pem.fr}
\author[A. Upadhyay]{Abhitosh Upadhyay}
\address{School of Mathematice and Computer Science,\newline
\indent Indian Institute Of Technology Goa, Ponda, 403401, India}
\email{abhitosh@iitgoa.ac.in}
\title[Extrinsic eigenvalues upper bounds in weighted manifolds]
{Extrinsic eigenvalues upper bounds for submanifolds in weighted manifolds}
\begin{document}
 
\maketitle

\begin{abstract}
We prove Reilly-type upper bounds for divergence-type operators of the 
second order as well as for Steklov problems on submanifolds of Riemannian
manifolds of bounded sectional curvature endowed with a weighted measure.
\end{abstract}

\vspace{.2cm}

\noindent {\bf MSC 2010:} 53C24, 53C42, 58J50.\vspace{2ex}

\noindent {\bf Key words:} {\small {\em Submanifolds, Reilly-type upper bounds, eigenvalues estimates, divergence-type operators, Steklov problems.}}

\section{Introduction}

Let $(M^n,g)$ be an $n$-dimensional compact, connected, oriented 
manifold without boundary, and consider an isometric immersion 
$X:M^n\to\R^{n+1}$ in the Euclidean space. The spectrum of the 
Laplacian of $(M,g)$ is an increasing sequence of real numbers
\[
0=\lambda_0(\Delta)<\lambda_1(\Delta)\leqslant\lambda_2(\Delta)
\leqslant\cdots\leqslant\lambda_k(\Delta)\leqslant\cdots\lgra+\infty.
\]
The eigenvalue $0$ (corresponding to constant functions) is simple and $\lambda_1(\Delta)$ is the first positive eigenvalue. In \cite{Re}, Reilly proved the following well-known upper bound for $\lambda_1(\Delta)$
\begin{equation}\label{Reilly1}
\lambda_1(\Delta)\leqslant\frac{n}{\vvol(M)}\int_MH^2dv_g,
\end{equation}
where $H$ is the mean curvature of the immersion. He also proved an 
analogous inequality involving the higher order mean curvatures. 
Namely, for $r\in\{1,\cdots,n\}$
\begin{equation}\label{Reillyr}
\lambda_1(\Delta)\left(\int_MH_{r-1}dv_g\right)^2\leqslant\vvol(M)\int_MH_r^2dv_g,
\end{equation}
where $H_r$ is the $r$-th mean curvature, defined by the $r$-th symmetric polynomial of the principal curvatures. Moreover, Reilly studied the equality cases and proved that equality in \eqref{Reilly1} or \eqref{Reillyr} is attained if and only if $X(M)$ is a geodesic sphere.

In the case of higher codimension, Reilly also proved that
\begin{equation}\label{Reillycodim1}
\lambda_1(\Delta)\leqslant\frac{n}{\vvol(M)}\int_M\|{\bf H}\|^2dv_g,
\end{equation}
where ${\bf H}$ is here the mean curvature vector, with equality if and only 
if $M$ is minimally immersed in a geodesic sphere.

The Reilly inequality can be easily extended to submanifolds of the sphere $\Ss^n$ using the canonical embedding of $\Ss^n$ into $\R^{n+1}$:
\begin{equation}\label{Reillycodim2}
\lambda_1(\Delta)\leqslant\frac{n}{\vvol(M)}\int_M(\|{\bf H}\|^2+1)dv_g.
\end{equation}
Moreover, El Soufi and Ilias \cite{EI} proved an analogue for submanifold of the hyperbolic space as
\begin{equation}\label{Reillycodim3}
\lambda_1(\Delta)\leqslant\frac{n}{\vvol(M)}\int_M(\|{\bf H}\|^2-1)dv_g.
\end{equation}
In case the ambient space has non-constant sectional curvature, Heintze 
\cite{He} proved the following weaker inequality
\begin{equation}\label{Heinzte}
\lambda_1(\Delta)\leqslant n(\|{\bf H}\|^2+\delta),
\end{equation}
where the ambient sectional curvature is bounded above by $\delta$.

On the other hand, more recently, in \cite{Ro2}, the second author prove
the following general inequality
\beqt\label{generalLT}
\lambda_1(L_{T,f})\left(\int_M\trace(S)\mu_f\right)^2\leqslant\left(\int_M\trace(T)\mu_f\right)\int_M\left(\|H_S\|^2+\|S\nabla f\|^2\right)\mu_f,
\eeqt
where $\mu_f=e^{-f}dv_g$ is the weighted measure of $(M,g)$ endowed 
with the density $e^{-f}$, $T,S$ are two symmetric, free-divergence 
$(1,1)$-tensors with $T$ positive definite, and $L_{T,f}$ is the second 
order differential operator defined for any smooth function $u$ on 
$M$ by
\[
L_{T,f}=-\ddiv(T\nabla u)+\langle \nabla f,T\nabla u\rangle.
\]
When $f=0$ and the tensor $S$ and $T$ are associated with higher 
order mean curvatures $H_s$ and $H_r$, we recover the inequality 
of Alias and Malacarn\'e \cite{AM} and, in particular, Reilly's inequality 
\eqref{Reillyr} if $r=0$.

\vspace{.2cm}

The first result of this paper gives upper bounds for the first eigenvalue 
of the operator $L_{T,f}$ for submanifolds of Riemannian manifolds with 
sectional curvature bounded by above which generalizes inequality \eqref{generalLT} in the non-constant curvature case. Namely, we prove 
the following.

\begin{thm}\label{thm1}
Let $(\bar{M}^{n+p},\bar{g},\bar{\mu}_{f})$ be a weighted Riemannian 
manifold with sectional curvature $sect_{\bar{M}}\leqslant\delta$ and 
$\bar{\mu}_{f}=e^{-f}dv_{\bar{g}}$. Let $(M,g)$ be a closed Riemannian 
manifold isometrically immersed into $(\bar{M}^{n+p},\bar{g}$) by $X$. 
We endow $M$ with the weighted measure $\mu_{f}=e^{-f}dv_g$. Let 
$T$ be a positive definite $(1,1)$-tensor on $M$ and denote by 
$\lambda_1$ the first positive eigenvalue of the operator $L_{T,f}$.
\begin{enumerate}

\item If $\delta\leqslant 0$, then
\[
\lambda_1\leqslant\sup_M\left[\delta\trace(T)+
\sup_M\left( \frac{\|H_T-T\nabla f\|}{\trace(S)}\right)\|H_S-S\nabla f\|\right].
\]

\item If $\delta>0$ and $X(M)$ is contained in a geodesic ball of radius 
$\frac{\pi}{4\sqrt{\delta}}$, 
\[
\lambda_1\leqslant\dfrac{\left( \displaystyle\int_M\trace(T)\mu_{f}\right)}{V_f(M)}\left(\delta  +\frac{ \displaystyle\int_M\|H_S-S(\nabla f)\|^2\mu_{f}}{V_f(M)
\inf\left(\trace(S)^2\right)}\right).
\]
\end{enumerate}
\end{thm}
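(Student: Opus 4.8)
The plan is to combine the weighted variational characterization of $\lambda_1(L_{T,f})$ with a comparison of $X(M)$ to the model space of constant curvature $\delta$, the bound $sect_{\bar M}\le\delta$ entering through the Hessian comparison theorem. Fix a base point $p_0\in\bar M$, write $r=\ddist(p_0,\cdot)$ and let $s_\delta,c_\delta$ be the usual solutions of $s_\delta''+\delta s_\delta=0$ with $s_\delta(0)=0$, $s_\delta'(0)=1$, $c_\delta=s_\delta'$. Setting $F(t)=\int_0^t s_\delta$, the vector field $Z:=s_\delta(r)\nablab r=\nablab(F\circ r)$ plays the role of the Euclidean position vector, and the Hessian comparison theorem gives, under $sect_{\bar M}\le\delta$, the pointwise bound $\overline{\mathrm{Hess}}(F\circ r)\ge c_\delta(r)\,\bar g$ on the relevant geodesic ball. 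I would first record the map $\Phi=(\phi_0,\dots,\phi_{n+p})$ into the model, with $\phi_0=\tfrac1{\sqrt\delta}c_\delta(r)$ and $\phi_i=s_\delta(r)\,\omega_i$ (here $\omega=\expo^{-1}/r$), so that $\sum_\alpha\phi_\alpha^2\equiv\tfrac1\delta$ is constant; a Rauch-type estimate for Jacobi fields shows $\Phi$ is distance non-increasing, i.e. $\Phi^*(\text{eucl})\le\bar g$. Restricting to $M$ and raising an index, this yields the pointwise numerator bound $\sum_\alpha\langle T\nabla\phi_\alpha,\nabla\phi_\alpha\rangle=\trace(TG)\le\trace(T)$, since $0\le G\le\iid$ and $T$ is positive definite.

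The second ingredient is a weighted Hsiung–Minkowski identity. Decomposing $Z=Z^\top+Z^\perp$ along $M$ and using the Gauss–Weingarten formulas together with $\ddiv(S)=0$, I would compute $\ddiv_f(SZ^\top)=\trace_{TM}\big(S\,\overline{\mathrm{Hess}}(F\circ r)\big)+\langle Z,H_S-S\nabla f\rangle$, where $H_S=\sum_i B(e_i,Se_i)$ is the $S$-mean curvature vector. Integrating against $\mu_f$ makes the left-hand side vanish, and $\overline{\mathrm{Hess}}(F\circ r)\ge c_\delta\,\bar g$ then gives the Minkowski-type inequality $\int_M c_\delta(r)\trace(S)\,\mu_f\le-\int_M\langle Z,H_S-S\nabla f\rangle\,\mu_f\le\int_M s_\delta(r)\,\|H_S-S\nabla f\|\,\mu_f$, using $|Z|=s_\delta(r)$. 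The same computation with $T$ in place of $S$ produces the companion relation involving $H_T-T\nabla f$, which is what couples the two tensors in part (1).

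For part (2) ($\delta>0$), I would apply the variational principle to $\phi_\alpha-\bar\phi_\alpha$, where $\bar\phi_\alpha=\tfrac1{V_f(M)}\int_M\phi_\alpha\mu_f$. The geodesic ball of radius $\tfrac\pi{4\sqrt\delta}$ is small enough to carry a well-defined $\mu_f$-weighted center of mass and forces $c_\delta>0$ throughout; taking $p_0$ to be that center of mass annihilates the angular means $\int_M\phi_i\mu_f$ ($i\ge1$), leaving only the radial mean $\bar\phi_0$. Summing the resulting inequalities and inserting $\sum_\alpha\phi_\alpha^2\equiv\tfrac1\delta$ together with the numerator bound gives $\lambda_1\,V_f(M)\big(\tfrac1\delta-\bar\phi_0^2\big)\le\int_M\trace(T)\mu_f$. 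The factor $\tfrac\delta{1-\delta\bar\phi_0^2}$ produced on rearrangement accounts for the leading $\delta$-term, while the Minkowski inequality above—bounding a $c_\delta$-weighted total of $\trace(S)$ by the $L^2(\mu_f)$-norm of $H_S-S\nabla f$—controls the residual $\bar\phi_0$ and yields, after Cauchy–Schwarz and elementary rearrangement, the correction $\tfrac{\int_M\|H_S-S\nabla f\|^2\mu_f}{V_f(M)\inf_M\trace(S)^2}$.

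The main obstacle is this balancing step: making a single $p_0$ annihilate all angular means at once is a center-of-mass/degree argument on the ball, and it is precisely the strict convexity—hence the radius bound $\tfrac\pi{4\sqrt\delta}$, which also guarantees $c_\delta>0$—that makes it available. For part (1) ($\delta\le0$) no such restriction is needed: along geodesics issuing from $p_0$ the comparison is of Cartan–Hadamard type, $\expo$ is a global diffeomorphism, and $c_\delta\ge1$, which pins down the sign of the $\delta\,\trace(T)$ contribution. In this regime there is neither a compact parameter space on which to run Hersch's argument nor a lower volume bound, so instead of a volume ratio the estimate closes pointwise: one controls the energy of the comparison map through the Minkowski identity for $T$ (producing $\|H_T-T\nabla f\|$) and its size through the identity for $S$ (producing $\|H_S-S\nabla f\|$ and $\trace(S)$), and a Cauchy–Schwarz/supremum step assembles these into the factor $\sup_M\big(\|H_T-T\nabla f\|/\trace(S)\big)\,\|H_S-S\nabla f\|$ inside the outer supremum alongside $\delta\,\trace(T)$.
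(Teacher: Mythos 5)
Your overall architecture coincides with the paper's: the test functions $\sd(r)\omega_i=\frac{\sd(r)}{r}x_i$ centred at the $\mu_f$-weighted centre of mass, the Grosjean/Rauch pointwise energy bound $\sum_i\langle T\nabla\phi_i,\nabla\phi_i\rangle\leqslant\trace(T)-\delta\langle TX^{\top},X^{\top}\rangle$, the weighted Hsiung--Minkowski inequalities coming from $\ddiv_f(SX^{\top})$ (the paper's Lemma \ref{lem1}), and, for $\delta>0$, the extra radial test function $\cd(r)-\overline{\cd}$ combined with the angular ones via $\cd^2+\delta\sd^2=1$, the residual mean $\overline{\cd}$ being controlled by Minkowski plus Cauchy--Schwarz. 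Your part (2) is therefore essentially the paper's proof; the only point you compress is the rearrangement $1-\overline{\cd}^{\,2}\geqslant(1+\beta)^{-1}$, which requires bounding one factor of $\overline{\cd}^{\,2}$ by Minkowski and Cauchy--Schwarz and the other by Jensen (this is the content of the paper's Lemma \ref{lem2}); that step does go through as you describe.

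The genuine gap is in part (1). After the energy bound and Lemma \ref{lem1}(3) one is left with
\[
\lambda_1\int_M\sd^2(r)\,\mu_f\leqslant\delta\int_M\trace(T)\sd^2(r)\,\mu_f+\int_M\|H_T-T\nabla f\|\,\sd(r)\cd(r)\,\mu_f,
\]
and to convert this into a supremum bound every term on the right must carry the common weight $\sd^2(r)$. The Minkowski inequality you actually wrote down, $\int_M\cd\,\trace(S)\,\mu_f\leqslant\int_M\sd\,\|H_S-S\nabla f\|\,\mu_f$, has the wrong weights for this purpose, and no pointwise absorption is possible: $\sd\cd/\sd^2=\cd/\sd\to\infty$ as $r\to0$, and the centre of mass can lie arbitrarily close to (or on) $M$, so $\sd(r)\cd(r)$ is not dominated by a constant times $\sd^2(r)$. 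What is needed is the $\sd$-weighted Minkowski inequality (the paper's Lemma \ref{lemsd}): multiply the pointwise inequality $\ddiv_f(SX^{\top})\geqslant\trace(S)\cd+\langle X,H_S-S(\nabla f)\rangle$ by $\sd(r)$ \emph{before} integrating by parts; the extra term $-\int_M\cd\,\sd\,\langle S\nabla r,\nabla r\rangle\,\mu_f$ produced by $\nabla\sd(r)$ is nonpositive precisely because $S$ is positive definite, and one obtains $\int_M\trace(S)\sd\cd\,\mu_f\leqslant\int_M\|H_S-S(\nabla f)\|\sd^2\,\mu_f$. This is the step your ``Cauchy--Schwarz/supremum assembly'' silently requires and does not supply; without it the factor $\sup_M\bigl(\|H_T-T\nabla f\|/\trace(S)\bigr)\|H_S-S\nabla f\|$ cannot be extracted.
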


The second eigenvalue problem that we consider in this paper is the 
Steklov problem associated with the operator $L_{T,f}$ on a submanifold 
$\Omega$ with non-empty boundary $\partial\Omega=M$ of a Riemannian
manifold with sectional curvature bounded by above. We can consider 
the following generalized weighted Steklov problem
\begin{equation} \label{SPLT}
\displaystyle
\left\{\begin{array}{ll}
L_{T,f} u=0&\text{on}\ \Omega,\\ \\
\dfrac{\partial u}{\partial\nu_T}=\sigma u&\text{on}\ M=\partial \Omega,
\end{array}
\right.
\end{equation}
where 
$\frac{\partial u}{\partial\nu_T}=\langle T(\widetilde{\nabla}u),\nu\rangle$. 
In the case where $f$ is constant, the operator $L_{T,f}$ is of particular 
interest for the study of $r$-stability when $T=T_r$ is the tensor associated 
with $r$-th mean curvature (see \cite{ACR} for instance). More precisely, 
from \cite{Au}, we know that this problem  \eqref{SPLT} has a discrete 
nonnegative spectrum and we denote by $\sigma_1$ its first eigenvalue. 
In \cite{Ro}, the second author has obtained upper bounds for this 
problem for domains of a manifold lying in a Euclidean space. 
Namely, he has proved
\[
\sigma_{1}\left(\int_{ M}\trace(S)\widetilde{\mu}_f\right)^2\leqslant 
\left( \int_{\Omega}\trace(T)\mu_f\right)\int_{ M}\big(|| H_S||^2+
||S\nabla f||^2\big)\widetilde{\mu}_f,
\]
where $\mu_f=e^{-f}dv_g$ and $\widetilde{\mu}_f=e^{-f}dv_{\widetilde{g}}$ 
are respectively, the weighted measures of $(\Omega,g)$ and 
$(M,\widetilde{g})$ endowed with the density $e^{-f}$ and where $T$ and 
$S$ are two symmetric, free-divergence $(1,1)$-tensors with $T$ positive 
definite. Note that without density and for $T=\iid$, this inequality has 
been proven by Ilias and Makhoul \cite{IM}. 

\vspace{.2cm}

The second result of the present paper gives a generalization of this
estimate when the manifold with boundary $(M,g)$ is immersed into 
an ambient Riemannian manifold of sectional curvature bounded by 
above. Namely, we prove the following.

\begin{thm}\label{thm2}
Let $(\bar{M}^{n+p},\bar{g},\bar{\mu}_{f})$ be a weighted Riemannian 
manifold with sectional curvature $sect_{\bar{M}}\leqslant\delta$ and 
$\bar{\mu}_{f}=e^{-f}dv_{\bar{g}}$. Let $(\Omega,g)$ be a compact 
Riemannian manifold with non-empty boundary $M$ isometrically 
immersed into $(\bar{M}^{n+p},\bar{g})$ by $X$. We endow $\Omega$ 
and $M$, respectively with the weighted measure $\mu_{f}=e^{-f}dv_g$ 
and $\tilde{\mu}_f=e^{-f}dv_{\tilde{g}}$, where $\tilde{g}$ is the induced 
metric on $M$. Let $T$, $S$ be a symmetric, divergence-free and 
positive definite $(1,1)$-tensors on $\Omega$ and $M$, respectively,
and denote by $\sigma_1$ the first eigenvalue of the Steklov problem 
\eqref{SPLT}.
\begin{enumerate}
\item If $\delta\leqslant 0$ and $X(\Omega)$ is contained in the  geodesic 
ball $B(p,R)$ of radius $R$, where $p$ is the center of mass of $M$ for 
the measure $\widetilde{\mu}_f$, then 

\begin{eqnarray*}
\sigma_{1}\leqslant \sup_{\Omega}\left[ \delta\trace(T)+\sup_{\Omega}\left( \frac{\|H_T-T(\overline{\nabla} f)\|}{\trace(T)}\right)\|H_T-T(\overline{\nabla} f)\|\right]\\ \times\left[\delta+\dfrac{\sup_M\|H_S-S(\nabla f)\|^2}{\inf_M(\trace(S))^2}\right]\frac{V_f(\Omega)}{V_f(M)}\sd^2(R).
\end{eqnarray*}
\item If $\delta>0$ and $X(\Omega)$ is contained in a geodesic ball of radius $\frac{\pi}{4\sqrt{\delta}}$, then
$$\sigma_{1}\leqslant \frac{\displaystyle \int_{\Omega}\trace(T)\mu_{f}}{V_f(M)}\left(\delta  +\frac{ \displaystyle\int_{M}\|H_S-S(\nabla f)\|^2\widetilde{\mu}_f}{V_f(M)\inf\left(\trace(S)^2\right)}\right).$$
\end{enumerate}
\end{thm}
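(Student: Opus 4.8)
The plan is to feed into the variational characterization of $\sigma_1$ a family of test functions built from the immersion $X$ and from the comparison geometry of the ambient ball. Integrating the weighted Green formula for $L_{T,f}$ over $\Omega$ and using the boundary condition in \eqref{SPLT}, one sees that $\sigma_0=0$ is realized by the constants and that
\beqt\label{rayleigh}
\sigma_1=\min\left\{\frac{\displaystyle\int_{\Omega}\langle T\widetilde\nabla u,\widetilde\nabla u\rangle\,\mu_f}{\displaystyle\int_M u^2\,\widetilde\mu_f}\;:\; u\not\equiv 0,\ \int_M u\,\widetilde\mu_f=0\right\}.
\eeqt
Here the numerator is integrated over $\Omega$ and carries $T$, whereas the denominator only sees the closed boundary $M$; this is the sole structural change with respect to the closed problem of Theorem \ref{thm1}, and it is exactly what turns $\int_M\trace(T)$ into $\int_{\Omega}\trace(T)$ in the conclusion. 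As test functions I would take the components $Y_0,\dots,Y_{n+p}$ of a spherical map adapted to $\delta$: with $\rho=\ddist(p,\cdot)$ and $\theta$ the unit radial field issuing from $p$, set $Y_0=\frac{1}{\sqrt\delta}\cd(\rho)$ and $Y_i=\frac{1}{\sqrt\delta}\sd(\rho)\,\theta_i$ when $\delta>0$, so that $\sum_\alpha Y_\alpha^2\equiv\frac{1}{\delta}$; when $\delta\leqslant 0$ I would use the components of $\exp_{p}^{-1}\circ X$. In both cases $p$ is the center of mass of $M$ for $\widetilde\mu_f$, which forces $\int_M Y_i\,\widetilde\mu_f=0$; after subtracting the means $\bar Y_\alpha=\frac{1}{V_f(M)}\int_M Y_\alpha\widetilde\mu_f$, the functions $Y_\alpha-\bar Y_\alpha$ are admissible in \eqref{rayleigh}.

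For the numerator I would use that the gradient on $\Omega$ is the tangential projection $\widetilde\nabla Y_\alpha=(\nablab Y_\alpha)^{\top}$, together with the Hessian comparison theorem. Since $sect_{\bar M}\leqslant\delta$ and $X(\Omega)$ stays in a ball of radius $\frac{\pi}{4\sqrt\delta}$ (resp.\ $R$), the Jacobi-field comparison yields $\sum_\alpha\nablab Y_\alpha\otimes\nablab Y_\alpha\leqslant\bar g$, whence the pointwise bound $\sum_\alpha\langle T\widetilde\nabla Y_\alpha,\widetilde\nabla Y_\alpha\rangle\leqslant\trace(T)$ because $T$ is positive definite. Summing the Rayleigh inequality \eqref{rayleigh} over $\alpha$ therefore gives
\beqt\label{numerator}
\sigma_1\sum_\alpha\int_M(Y_\alpha-\bar Y_\alpha)^2\,\widetilde\mu_f\;\leqslant\;\sum_\alpha\int_{\Omega}\langle T\widetilde\nabla Y_\alpha,\widetilde\nabla Y_\alpha\rangle\,\mu_f\;\leqslant\;\int_{\Omega}\trace(T)\,\mu_f.
\eeqt

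It remains to bound $\sum_\alpha\int_M(Y_\alpha-\bar Y_\alpha)^2\widetilde\mu_f$ from below, and this is where $S$ and the mean curvature enter. Writing $\ddiv_f=\ddiv-\langle\nabla f,\cdot\rangle$ for the weighted divergence on $M$ and using that $S$ is divergence-free, the Gauss formula gives
\beqt\label{minkowski}
\ddiv_f\!\left(S\widetilde\nabla Y_\alpha\right)=\langle S,\overline{\mathrm{Hess}}\,Y_\alpha|_{TM}\rangle+\langle \nablab Y_\alpha,\,H_S-S\nabla f\rangle,
\eeqt
where $H_S=\sum_{i,j}S^{ij}\,\mathrm{II}(e_i,e_j)$ is the $S$-mean curvature of $M$ in $\bar M$. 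Multiplying \eqref{minkowski} by $Y_\alpha-\bar Y_\alpha$, integrating against $\widetilde\mu_f$, summing over $\alpha$, and inserting the comparison inequality $\overline{\mathrm{Hess}}\,Y_\alpha\leqslant-\delta Y_\alpha\,\bar g$ (whose sign is controlled on the ball of radius $\frac{\pi}{4\sqrt\delta}$, where $\cd(\rho)>0$), one is led, after a Cauchy--Schwarz step and the estimate $\inf_M(\trace S)^2$, to the lower bound
\beqt\label{denominator}
\sum_\alpha\int_M(Y_\alpha-\bar Y_\alpha)^2\,\widetilde\mu_f\;\geqslant\;\frac{V_f(M)}{\delta+\dfrac{\displaystyle\int_M\|H_S-S\nabla f\|^2\,\widetilde\mu_f}{V_f(M)\,\inf(\trace(S)^2)}}.
\eeqt
Combining \eqref{numerator} and \eqref{denominator} produces exactly the inequality in part (2). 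For part (1), where $\delta\leqslant 0$, the same scheme is run with the linear functions $\exp_{p}^{-1}\circ X$: the size of their gradients over $B(p,R)$ is now measured by $\sd(R)$, which is what produces the factor $\sd^2(R)\,V_f(\Omega)/V_f(M)$, while the first bracket, involving $H_T-T\nablab f$, arises from the analogue of \eqref{minkowski} applied to $T$ on $\Omega$ and estimated pointwise.

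The step I expect to be the main obstacle is the denominator estimate \eqref{denominator}. Two points require care. First, the comparison $\overline{\mathrm{Hess}}\,Y_\alpha\leqslant-\delta Y_\alpha\bar g$ is only useful after multiplication by the possibly sign-changing factor $Y_\alpha-\bar Y_\alpha$, so one must exploit the constancy of $\sum_\alpha Y_\alpha^2$ and restrict to the ball of radius $\frac{\pi}{4\sqrt\delta}$ to keep the relevant quantities signed; this is the geometric reason for that radius. Second, the density must be carried through every integration by parts so that $H_S$ is systematically replaced by $H_S-S\nabla f$, and the passage from the pointwise comparison to the global inequality \eqref{denominator} has to be arranged so that the curvature contribution appears additively as the ``$\delta$'' term rather than multiplicatively. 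For $\delta\leqslant 0$ the additional difficulty is that $\sum_\alpha Y_\alpha^2=\rho^2$ is no longer constant, so one must compare $\sd(\rho)$ with $\rho$ uniformly on $B(p,R)$ to extract the factor $\sd^2(R)$.
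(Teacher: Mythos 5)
Your treatment of case (2) is, in substance, the paper's proof: the spherical map $\left(\tfrac{1}{\sqrt{\delta}}\cd(\rho),\ \sd(\rho)\theta_i\right)$ is exactly the pair of test functions $\tfrac{\sd(r)}{r}x_i$ and $\cd(r)-\widetilde{\cd}$ used there; your pointwise bound $\sum_\alpha\langle T\nabla Y_\alpha,\nabla Y_\alpha\rangle\leqslant\trace(T)$ is Grosjean's inequality \eqref{ineqgrosjean} combined with $\langle T\nabla\cd,\nabla\cd\rangle=\delta^2\langle TX^{\top},X^{\top}\rangle$; and your lower bound on $\sum_\alpha\int_M(Y_\alpha-\bar Y_\alpha)^2\widetilde{\mu}_f$ is precisely Lemma \ref{lem2} once that sum is rewritten, via the center-of-mass condition, as $\tfrac{V_f(M)}{\delta}\left(1-\overline{\cd}^2\right)$. (Mind the normalization: you need $Y_i=\sd(\rho)\theta_i$, not $\tfrac{1}{\sqrt{\delta}}\sd(\rho)\theta_i$, to get $\sum_\alpha Y_\alpha^2\equiv\tfrac1\delta$, since the identity is $\cd^2+\delta\sd^2=1$.) You correctly flag the denominator estimate as the crux; in the paper it is established by an algebraic manipulation of the weighted Hsiung--Minkowski inequalities of Lemma \ref{lem1} rather than by the Hessian-comparison route you sketch, but the two are equivalent in substance.

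Case (1), as you describe it, has a genuine gap. First, the test functions must be $\tfrac{\sd(r)}{r}x_i$, not the raw normal coordinates $x_i$: with $x_i$ you lose the comparison \eqref{ineqgrosjean}, and, more importantly, the boundary $L^2$-norm becomes $\int_M r^2\,\widetilde{\mu}_f$, whereas the only available lower bound (Lemma \ref{lem31}) controls $\int_M\sd^2(r)\,\widetilde{\mu}_f$; since $r\leqslant\sd(r)$ when $\delta\leqslant0$, the inequality points the wrong way. Second, the factor $\sd^2(R)$ does not come from the size of the gradients of the test functions: those gradients are controlled by $\trace(T)-\delta\langle TX^{\top},X^{\top}\rangle$, and for $\delta\leqslant0$ the extra term $-\delta\langle TX^{\top},X^{\top}\rangle\geqslant0$ cannot simply be discarded from an upper bound. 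The actual mechanism is to apply Lemma \ref{lem1}, item (3), to $T$ on $\Omega$ so as to replace $\int_\Omega\left(\trace(T)-\delta\langle TX^{\top},X^{\top}\rangle\right)\mu_f$ by $\int_\Omega\left(\delta\sd^2(r)\trace(T)+\|H_T-T\nabla f\|\sd(r)\cd(r)\right)\mu_f$, then to use the integral inequality $\int_\Omega\trace(T)\sd(r)\cd(r)\mu_f\leqslant\int_\Omega\|H_T-T\nabla f\|\sd^2(r)\mu_f$ of Lemma \ref{lemsd}, and only then to bound $\int_\Omega\sd^2(r)\left[\cdots\right]\mu_f$ by $\sd^2(R)V_f(\Omega)\sup_\Omega\left[\cdots\right]$. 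Your phrase ``estimated pointwise'' skips these two integral steps, and without them neither the bracket $\delta\trace(T)+\sup\left(\cdots\right)\|H_T-T\nabla f\|$ nor the factor $\sd^2(R)V_f(\Omega)$ emerges.
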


Finally, we will consider the so-called eigenvalue problem for Wentzell 
boundary conditions
\begin{equation} \label{SWP} 
\displaystyle
\left\{\begin{array}{rcll}
\Delta u &=& 0 & \text{in} \ \Omega \\
-b\widetilde{\Delta} u-\dfrac{\partial u}{\partial\nu} &=& \alpha u & \text{on} \ M,
\end{array}
\right.
\end{equation}
where $b$ is a given positive constant, $\Omega$ is a submanifold with
non-empty boundary $\partial\Omega=M$ of a Riemannian manifold $M$
with sectional curvature bounded by above, and $\Delta$, 
$\widetilde{\Delta}$ denote the Laplacians on $\Omega$ and $M$, 
respectively. It is clear that if $b=0$, then we recover the classical 
Steklov problem. The spectrum of this problem is an increasing 
sequence (see \cite{DKL}) with $0$ as first eigenvalue which is 
simple and the corresponding eigenfunctions are the constant 
ones. We denote by $\alpha_1$ the first positive eigenvalue. 
In \cite{Ro2}, the second author proved the following estimate 
when $\Omega$ is a submanifold of the Euclidean space 
$\mathbb{R}^n$ 
\[
\alpha_1\left(\int_{\partial M}\trace(S)dv_g\right)^2\leqslant \Big(nV(M)+b(n-1)V(\partial M)\Big)\left(\int_{\partial M}\| H_S\|^2dv_g\right).
\]

\vspace{.2cm}

In the following theorem, we obtain a comparable estimate when the 
ambient space is of bounded sectional curvature. Namely, we prove the following.

\begin{thm}\label{thm3}
Let $(\bar{M}^{n+p},\bar{g})$ be a Riemannian manifold with sectional curvature $sect_{\bar{M}}\leqslant\delta$. Let $(\Omega,g)$ be a compact Riemannian manifold with non-empty boundary  $M$ isometrically immersed into $(\bar{M}^{n+p},\bar{g})$ by $X$. We denote by $\widetilde{g}$ the induced metric on $M$. Let $S$ be a symmetric, divergence-free and positive definite $(1,1)$-tensor on $M$ and denote by $\alpha_1$ the first eigenvalue of the Steklov-Wentzell problem \eqref{SWP}.
\begin{enumerate}
\item If $\delta\leqslant 0$ and $X(\Omega)$ is contained in the geodesic 
ball $B(p,R)$ of radius $R$, where $p$ is the center of mass of $\overline{M}$, then 
$$\alpha_1\leqslant \Big[n\frac{V(\Omega)}{V(M)}+b(n-1)-\delta\sd^2(R)\left(\frac{V(\Omega)}{V(M)}+b\right)\Big]\left(\delta+\dfrac{\sup_M\|H_S\|^2}{\inf_M(\trace(S))^2}\right).$$
\item If $\delta>0$ and so $X(\Omega)$ is contained in a geodesic ball of radius $\frac{\pi}{4\sqrt{\delta}}$, then
\[
\alpha_1\leqslant \left( n\frac{V(\Omega)}{V(M)}+b(n-1)\right)\left(\delta  +
\frac{ \displaystyle\int_{M}\|H_S\|^2dv_{\widetilde{g}}}{V(M)
\inf\left(\trace(S)^2\right)}\right).
\]
\end{enumerate}
\end{thm}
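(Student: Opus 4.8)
The plan is to bound $\alpha_1$ through the variational characterization of the Steklov--Wentzell problem \eqref{SWP},
$$\alpha_1=\min\left\{\frac{\int_\Omega|\nabla u|^2\,dv_g+b\int_M|\widetilde\nabla u|^2\,dv_{\widetilde g}}{\int_M u^2\,dv_{\widetilde g}}\ :\ u\in H^1(\Omega),\ \int_M u\,dv_{\widetilde g}=0\right\},$$
and to feed into it the same family of extrinsic test functions used for Theorems \ref{thm1} and \ref{thm2}. I would take $p$ to be an appropriate center of mass of $M$ (whose existence under the stated ball hypotheses is part of the argument), fix an orthonormal basis $\{E_1,\dots,E_{n+p}\}$ of $T_p\bar M$, set $r=\ddist(p,\cdot)$, and define $\psi_\alpha=\frac{\sd(r)}{r}\langle\exp_p^{-1},E_\alpha\rangle$ for $\alpha=1,\dots,n+p$. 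The choice of $p$ is made precisely so that $\int_M\psi_\alpha\,dv_{\widetilde g}=0$, which makes each $\psi_\alpha$ admissible; summing the Rayleigh quotient over $\alpha$ then gives
$$\alpha_1\int_M\Big(\sum_\alpha\psi_\alpha^2\Big)dv_{\widetilde g}\ \le\ \int_\Omega\sum_\alpha|\nabla\psi_\alpha|^2\,dv_g+b\int_M\sum_\alpha|\widetilde\nabla\psi_\alpha|^2\,dv_{\widetilde g}.$$

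The three sums are then estimated by comparison geometry. First, $\sum_\alpha\psi_\alpha^2=\sd(r)^2$ identically. Next, using the identity $\cd^2=1-\delta\sd^2$ together with the Hessian/Rauch comparison under $sect_{\bar M}\le\delta$, the ambient symmetric tensor $\sum_\alpha d\psi_\alpha\otimes d\psi_\alpha$ has radial eigenvalue $\cd(r)^2$ and remaining eigenvalues controlled by $1$, so that for a $k$-dimensional submanifold $\sum_\alpha|\nabla\psi_\alpha|^2\le k-\delta\sd(r)^2$ pointwise. Applying this with $k=n$ on $\Omega$ and $k=n-1$ on $M$, and, when $\delta\le0$, bounding $-\delta\sd(r)^2\le-\delta\sd(R)^2$ on $B(p,R)$, yields
$$\int_\Omega\sum_\alpha|\nabla\psi_\alpha|^2+b\int_M\sum_\alpha|\widetilde\nabla\psi_\alpha|^2\ \le\ \Big[nV(\Omega)+b(n-1)V(M)\Big]-\delta\sd(R)^2\big(V(\Omega)+bV(M)\big);$$
for $\delta>0$ on the ball of radius $\frac{\pi}{4\sqrt\delta}$ one has $\cd\le1$, so the traces are bounded by $n$ and $n-1$ with no correction term. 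Dividing by $V(M)$ reproduces exactly the first bracket of each statement.

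It remains to bound $\int_M\sd(r)^2\,dv_{\widetilde g}$ from below, and this is where the divergence-free tensor $S$ and the mean curvature $H_S$ enter, exactly as in Theorem \ref{thm1}. Since $S$ is symmetric and divergence-free, $\int_M\langle S\widetilde\nabla\psi_\alpha,\widetilde\nabla\psi_\alpha\rangle\,dv_{\widetilde g}=-\int_M\psi_\alpha\,\ddiv(S\widetilde\nabla\psi_\alpha)\,dv_{\widetilde g}$. Decomposing the intrinsic Hessian of $\psi_\alpha$ on $M$ into its ambient Hessian and the second fundamental form, summing over $\alpha$, and using $\sum_\alpha\psi_\alpha\,\overline\nabla\psi_\alpha=\tfrac12\overline\nabla(\sd^2)=\sd\cd\,\partial_r$ together with the model identity $\overline{\mathrm{Hess}}\,\psi_\alpha=-\delta\psi_\alpha\,\bar g$ (made one-sided by the bound $sect_{\bar M}\le\delta$), produces a pointwise lower bound for $\sum_\alpha\psi_\alpha L_S\psi_\alpha$ of the form $\delta\,\sd^2\,\trace(S)-\sd\cd\,\|H_S\|$. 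A Cauchy--Schwarz estimate on the $\|H_S\|$-term, normalized by $\inf_M\trace(S)$ and by $V(M)$, then rearranges into
$$\int_M\sd(r)^2\,dv_{\widetilde g}\ \ge\ \frac{V(M)}{\delta+\dfrac{\sup_M\|H_S\|^2}{\inf_M(\trace S)^2}}\, ,$$
the reciprocal of the second factor; keeping the integral average rather than the supremum of $\|H_S\|^2$ gives the form appearing in the case $\delta>0$.

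Combining the numerator estimate with this denominator estimate in the summed Rayleigh quotient, through $\alpha_1\le\frac{\mathrm{num}}{V(M)}\cdot\frac{V(M)}{\int_M\sd^2}$, and treating the two signs of $\delta$ with the corresponding ball hypotheses, would yield both statements. I expect the main difficulty to be this third step together with the sign bookkeeping in the comparison estimates: one must track the correct direction of the Hessian comparison for $\psi_\alpha$ under the upper bound $sect_{\bar M}\le\delta$, check that the $H_S$-term is absorbed in the stated form after Cauchy--Schwarz, and confirm that an admissible center of mass $p$ exists with $\int_M\psi_\alpha\,dv_{\widetilde g}=0$ (a fixed-point/degree argument, which for $\delta>0$ is exactly what forces the radius $\frac{\pi}{4\sqrt\delta}$, ensuring $\cd,\sd>0$ and $\exp_p^{-1}$ well defined along $X(\Omega)$). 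The interior/boundary split $nV(\Omega)+b(n-1)V(M)$, by contrast, is a routine dimension count once the pointwise gradient bound $\sum_\alpha|\nabla\psi_\alpha|^2\le k-\delta\sd^2$ is established.
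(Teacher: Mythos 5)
Your treatment of the case $\delta\leqslant 0$ is essentially the paper's own argument: the same test functions $\frac{\sd(r)}{r}x_i$ in the variational characterization \eqref{characalpha1}, Grosjean's pointwise estimate applied once on $\Omega$ (dimension $n$) and once on $M$ (dimension $n-1$), the bound $\|X^{\widetilde{\top}}\|\leqslant\|X^{\top}\|\leqslant\sd(R)$ on $B(p,R)$, and finally the lower bound on $\int_M\sd^2(r)\,dv_{\widetilde{g}}$ obtained from the Hsiung--Minkowski-type identities for the divergence-free tensor $S$ --- this last step is exactly Lemma \ref{lem31} with $f=0$. Up to the usual bookkeeping (and the choice of $p$ so that $\int_M\frac{\sd(r)}{r}x_i\,dv_{\widetilde{g}}=0$), that half is sound.

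The case $\delta>0$ contains a genuine gap. You drop the terms $-\delta\|X^{\top}\|^2$, obtain $\alpha_1\int_M\sd^2(r)\,dv_{\widetilde{g}}\leqslant nV(\Omega)+b(n-1)V(M)$, and then invoke the lower bound
\[
\int_M\sd^2(r)\,dv_{\widetilde{g}}\ \geqslant\ \frac{V(M)}{\delta+\dfrac{\displaystyle\int_M\|H_S\|^2dv_{\widetilde{g}}}{V(M)\inf_M(\trace(S))^2}},
\]
which you never establish. It does not follow from the $\delta\leqslant0$ argument: the proof of Lemma \ref{lem31} discards the term $\int_M\left(\delta\sd^2(r)-\cd(r)\sd(r)\frac{\|H_S\|_\infty}{\inf(\trace(S))}\right)\langle S\nabla r,\nabla r\rangle$ precisely because it is nonpositive when $\delta\leqslant0$; for $\delta>0$ it has no sign. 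Nor does it follow from Lemma \ref{lem2}: since $\delta\sd^2=1-\cd^2$, your inequality is equivalent to $1-\frac{1}{V(M)}\int_M\cd^2(r)\geqslant 1/\beta$ (with $\beta$ the quantity of Lemma \ref{lem2}), whereas the lemma only gives $1-\left(\frac{1}{V(M)}\int_M\cd(r)\right)^2\geqslant 1/\beta$, and Cauchy--Schwarz goes the wrong way here because $\left(\frac{1}{V(M)}\int_M\cd(r)\right)^2\leqslant\frac{1}{V(M)}\int_M\cd^2(r)$. The paper circumvents this entirely: for $\delta>0$ it \emph{keeps} the $-\delta\|X^{\top}\|^2$ and $-\delta\|X^{\widetilde{\top}}\|^2$ terms, introduces the additional test function $\cd(r)-\widetilde{\cd}$, and adds the resulting Rayleigh-quotient inequality to $\delta$ times the first one, so that the $\delta^2\int\|X^{\top}\|^2$ terms cancel and $\cd^2+\delta\sd^2=1$ collapses the left-hand side to $\alpha_1V(M)\left(1-\left(\frac{1}{V(M)}\int_M\cd(r)\right)^2\right)$, which is then controlled by Lemma \ref{lem2}. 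To repair your argument you must either introduce that second test function and carry out the cancellation, or give an independent proof of your claimed lower bound on $\int_M\sd^2(r)\,dv_{\widetilde{g}}$ for $\delta>0$.
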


\section{Preliminaries}

Let $(\bar{M}^{n+p},\bar{g},\bar{\mu}_{f})$ be a weighted Riemannian 
manifold with sectional curvature $sect_{\bar{M}}\leqslant\delta$ and 
weighted measure $\bar{\mu}_{f}=e^{-f}dv_{\bar{g}}$. Let $p$ a fixed 
point in $\bar{M}$, we denote by $r(x)$ the geodesic distance between 
$x$ and $p$. Moreover, we define the vector field $X$ by 
$X(x):=\sd(r(x))(\bar{\nabla} r)(x)$, $\sd$ is the function defined by
\[ \sd(r)=\left\{ \begin{array}{lll}

\frac{1}{\sqrt{\delta}}\sin(\sqrt{\delta}r) &\; \text{if} \;\; \delta>0\\

r &\; \text{if}\;\; \delta=0 \\

\frac{1}{\sqrt{|\delta|}}\sinh(\sqrt{|\delta|}r) &\; \text{if} \;\; \delta<0.\ \ \end{array} \right.\] 
We also define 
\[ \cd(r)=\left\{ \begin{array}{lll}

\cos(\sqrt{\delta}r) &\; \text{if} \;\; \delta>0\\

1 &\; \text{if}\;\; \delta=0 \\

\cosh(\sqrt{|\delta|}r) &\; \text{if} \;\; \delta<0.\ \ \end{array} \right.
\] 
Hence, we have
\[
\cd^2+\delta\sd^2=1, \quad \sd'=\cd \quad\text{and}\quad \cd'=-\delta\sd.
\]

In addition, let $(M^n,g)$ be a closed Riemannian manifold isometrically 
immersed into $(\bar{M}^{n+p},\bar{g})$ by $\phi$. If $\delta>0$, then we
assume that $\phi(M)$ is contained in a geodesic ball of radius $\frac{\pi}
{2\sqrt{\delta}}$. We endow $M$ with the weighted measure 
$\mu_{f}=e^{-f}dv_{g}$. We can define on $M$ a divergence associated 
with the volume form $\mu_{f}=e^{-f}dv_g$ by
\[
\ddiv_fY=\ddiv Y-\langle \nabla f,Y\rangle
\]
or, equivalently,
\[
d(\iota_{Y}\mu_{f} )=\ddiv_{f}(Y)\mu_{f},
\]
where $\nabla$ is the gradient on $\Sigma$, that is, the projection on 
$T\Sigma$ of the gradient $\bar{\nabla}$ on $\bar{M}$. We call it the 
$f$-divergence. We recall briefly some basic facts about the 
$f$-divergence. In the case where $\Sigma$ is closed, we first have 
the weighted version of the divergence theorem:
\begin{equation}\label{intdiv}
\int_{\Sigma}\ddiv_fY~\mu_{f}=0,
\end{equation}
for any vector field $Y$ on $\Sigma$. From this, we deduce easily the 
integration by parts formula 
\begin{equation}\label{IPP}
\int_{\Sigma} u~\ddiv_fY~\mu_{f}=-\int_{\Sigma}\langle \nabla u,Y\rangle~\mu_{f},
\end{equation}
for any smooth function $u$  and any vector field $Y$ on $\Sigma$.
First, we prove the following elementary lemma which generalize in 
non constant curvature the classical Hsiung-Minkowski formula (see 
\cite{Gr1,Ro} for instance).

\begin{lemma}\label{lem1}
Let $T$ be a symmetric divergence-free positive $(1,1)$-tensor on $M$. 
Then the following hold
\begin{enumerate}
\item $\ddiv_f\left(TX^{\top}\right)\geqslant \trace(T)\cd+\langle X,H_T-T(\nabla f)\rangle$.
\item $\displaystyle\int_M\trace(T)\cd \mu_{f}\leqslant-\displaystyle\int_M\langle X,H_T-T(\nabla f)\rangle \mu_{f}$.
\item $\delta\displaystyle\int_M\langle TX^{\top},X^{\top}\rangle \mu_{f}\geqslant \displaystyle\int_M\trace(T)\cd^2\mu_{f}-\int_M\|H_T-T(\nabla f)\|\sd\cd\mu_{f}$. 
\end{enumerate}
\end{lemma}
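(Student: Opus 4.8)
The plan is to reduce everything to one pointwise estimate on the ambient covariant derivative of the radial field $X=\sd(r)\,\overline{\nabla} r$, and then to push it through the Gauss--Weingarten formulas together with the two weighted integral identities \eqref{intdiv} and \eqref{IPP}. The geometric heart is as follows. Since $\sd'=\cd$, for any vector $Y$ tangent to $\bar M$ one computes
\[
\overline{\nabla}_Y X=\cd\,\langle \overline{\nabla} r,Y\rangle\,\overline{\nabla} r+\sd\,\overline{\mathrm{Hess}}\,r(Y),
\]
where $\overline{\mathrm{Hess}}\,r(Y)=\overline{\nabla}_Y\overline{\nabla} r$. I would then invoke the Hessian comparison theorem: the hypothesis $sect_{\bar M}\leqslant\delta$ forces $\overline{\mathrm{Hess}}\,r\geqslant \tfrac{\cd}{\sd}\big(\bar g-dr\otimes dr\big)$ on the geodesic ball fixed in the Preliminaries, where moreover $\cd>0$. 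Pairing the displayed identity with $Y$ and substituting this lower bound (the radial term cancels the $dr\otimes dr$ contribution) yields the single inequality I will use throughout,
\[
\langle \overline{\nabla}_Y X,Y\rangle\geqslant \cd\,|Y|^2 .
\]

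For (1), I write $\ddiv_f(TX^\top)=\ddiv(TX^\top)-\langle\nabla f,TX^\top\rangle$. Since $T$ is symmetric and divergence-free, in an orthonormal frame $\{e_i\}$ of $TM$ one has $\ddiv(TX^\top)=\sum_i\langle\nabla_{e_i}X^\top,Te_i\rangle$. Splitting $\overline{\nabla}_{e_i}X$ by Gauss--Weingarten, its tangential part is $\nabla_{e_i}X^\top-A_{X^\perp}e_i$, whence
\[
\ddiv(TX^\top)=\sum_i\langle\overline{\nabla}_{e_i}X,Te_i\rangle+\sum_i\langle A_{X^\perp}e_i,Te_i\rangle .
\]
Diagonalizing the positive tensor $T$ and applying the key estimate termwise bounds the first sum below by $\cd\,\trace(T)$; the second sum equals $\langle H_T,X^\perp\rangle=\langle H_T,X\rangle$, where $H_T:=\sum_i B(Te_i,e_i)$ is the $T$-mean curvature vector and $B$ the second fundamental form. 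As $\langle\nabla f,TX^\top\rangle=\langle T\nabla f,X\rangle$, assembling the pieces gives exactly $\ddiv_f(TX^\top)\geqslant\trace(T)\cd+\langle X,H_T-T(\nabla f)\rangle$.

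Part (2) is then immediate: integrating (1) against $\mu_f$ and applying the weighted divergence theorem \eqref{intdiv} kills the left-hand side and leaves the stated inequality. For (3), I use $\cd'=-\delta\sd$, so the function $x\mapsto\cd(r(x))$ satisfies $\overline{\nabla}(\cd\circ r)=-\delta X$ and hence $\nabla(\cd\circ r)=-\delta X^\top$ on $M$. Multiplying (1) by $\cd\geqslant 0$ (legitimate by the radius hypothesis) and integrating, the formula \eqref{IPP} with $u=\cd\circ r$ and $Y=TX^\top$ converts $\int_M\cd\,\ddiv_f(TX^\top)\mu_f$ into $\delta\int_M\langle TX^\top,X^\top\rangle\mu_f$; bounding the remaining cross term by Cauchy--Schwarz with $|X|=\sd$, namely $\langle X,H_T-T\nabla f\rangle\geqslant-\sd\,\|H_T-T\nabla f\|$, produces (3).

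The main obstacle, and the only genuinely nontrivial input, is the Hessian comparison step and the care with the domain it demands: one must know that $r$ is smooth along $\phi(M)$, that $|\overline{\nabla} r|=1$, and that $\cd(r)>0$ so that multiplying inequalities by $\cd$ preserves their direction. This is exactly what the standing assumption $\phi(M)\subset B\!\left(p,\tfrac{\pi}{2\sqrt{\delta}}\right)$ for $\delta>0$ guarantees, the case $\delta\leqslant 0$ being unconstrained. Everything after the pointwise estimate is routine tensor bookkeeping and the two weighted integral identities.
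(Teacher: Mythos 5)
Your argument is correct, and for parts (2) and (3) it coincides with the paper's: integrate part (1) against $\mu_f$ using the weighted divergence theorem, then multiply (1) by $\cd\geqslant 0$ and integrate by parts via $\nabla(\cd\circ r)=-\delta X^{\top}$ (the paper phrases this through $\tfrac{1}{\delta}\int_M\langle T\nabla\cd,\nabla\cd\rangle\mu_f$ for $\delta\neq 0$, which is the same computation). The genuine difference is in part (1): the paper simply quotes Grosjean's pointwise inequality $\ddiv(TX^{\top})\geqslant\trace(T)\cd+\langle X,H_T\rangle$ from \cite{Gr1} and adds the weight term $-\langle T\nabla f,X\rangle$, whereas you reprove that cited ingredient from scratch --- the Hessian comparison bound $\langle\overline{\nabla}_YX,Y\rangle\geqslant\cd\,|Y|^2$, the Gauss--Weingarten splitting of $\nabla_{e_i}X^{\top}$ into $(\overline{\nabla}_{e_i}X)^{\top}+A_{X^{\perp}}e_i$, and the diagonalization of the positive tensor $T$ to control the first sum termwise. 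This is exactly the content of Grosjean's lemma, so your route is a self-contained expansion rather than a logically distinct proof; what it buys is transparency about where each hypothesis enters (the upper sectional curvature bound in the Hessian comparison, positivity of $T$ in the termwise estimate, symmetry and divergence-freeness in reducing $\ddiv(TX^{\top})$ to $\sum_i\langle\nabla_{e_i}X^{\top},Te_i\rangle$, and the radius restriction ensuring $r$ is smooth away from $p$ with $\cd(r)>0$), at the cost of length. One small point worth making explicit if you write this up: $r$ is not smooth at the center $p$ itself, but $X=\sd(r)\overline{\nabla}r$ extends smoothly across $p$ and the comparison estimate survives there, so the pointwise inequality holds on all of $M$.
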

\begin{proof}
The proof is a straightforward consequence of the analogue non-weighted
result proven by Grosjean. Namely, in \cite{Gr1}, the author has shown 
that
\[
\ddiv_M(TX^{\top})\geqslant \trace(T)\cd(r)+\langle X,H_T\rangle.
\]
Hence, from the definition of the $f$-divergence, we have
\beQ
\ddiv_f(TX^{\top})&=&\ddiv (TX^{\top})-\langle \nabla f,TX^{\top}\rangle\\
&\geqslant&\trace(T)\cd(r)+\langle X,H_T-T(\nabla f)\rangle,
\eeQ
and this proves part $(1)$. For the second part, we integrate the last 
inequality with respect to the measure $\mu_{f}$ and we get immediately 
\[
\displaystyle\int_M\trace(T)\cd \mu_{f}\leqslant-\displaystyle
\int_M\langle X,H_T-T(\nabla f)\rangle \mu_{f},
\]
since
\[
\displaystyle\int_M\ddiv_f(TX^{\top})\mu_{f}=0.
\]
Finally, for the last part, if $\delta=0$, then $\cd=1$ and we get directly the 
conclusion from the second one by using 
\[
|\langle X,H_T-T(\nabla f)\rangle|\leqslant ||X||\cdot||H_T-T(\nabla f)||=
\sd||H_T-T(\nabla f)||.
\]
If $\delta\neq0$, since $X^{\top}=\sd(r)\nabla r=-\delta \nabla \cd(r)$, then 
we have 
\beQ
\delta\displaystyle\int_M\langle TX^{\top},X^{\top}\rangle \mu_{f}&=&\frac{1}{\delta}\int_M\langle T(\nabla \cd,\nabla \cd)\mu_{f}\\
&=&-\frac{1}{\delta}\int_M\ddiv_f(T\nabla\cd)\cd\mu_{f}\\
&=&\int_M\ddiv_f(X^{\top})\cd\mu_{f}\\
&\geqslant&\int_M\trace(T)\cd^2\mu_{f}-\int_M\|H_T-T(\nabla f)\|\sd\cd\mu_{f},
\eeQ
where we have used the first part of the lemma and the well-known 
Cauchy-Schwarz inequality.
\end{proof}

\section{Two key lemma}

In this section we will prove two basic key lemma that will be used 
throughout the paper.

\begin{lemma}\label{lem31}
Let $(\bar{M}^{n+p},\bar{g},\bar{\mu}_{f})$ be a weighted Riemannian 
manifold with sectional curvature $sect_{\bar{M}}\leqslant\delta\leqslant0$,
and $\bar{\mu}_{f}=e^{-f}dv_{\bar{g}}$. Let $(M,g)$ be a closed Riemannian
manifold isometrically immersed into $(\bar{M}^{n+p},\bar{g})$, and we 
endow $M$ with the weighted measure $\mu_{f}=e^{-f}dv_g$. Let $S$ be a
symmetric, divergence-free and positive definite $(1,1)$-tensor on $M$. 
Then, we have
\[
\frac{\displaystyle\int_M\|X\|^2\mu_{f}}{V_f(M)}\geqslant\dfrac{1}
{\delta+\dfrac{\|H_S-S(\nabla f)\|^2_{\infty}}{\inf(\trace(S))^2}}.
\]
\end{lemma}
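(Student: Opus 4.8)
The plan is to read the bound off directly from part $(3)$ of Lemma~\ref{lem1} applied with $T=S$, using the sign hypothesis $\delta\leqslant 0$ together with the identity $\cd^2+\delta\sd^2=1$. First I would observe that, since the Riemannian distance function satisfies $\|\bar{\nabla} r\|=1$, the field $X=\sd(r)\bar{\nabla} r$ has $\|X\|^2=\sd^2(r)$, so the quantity to estimate is precisely $\int_M\sd^2\mu_{f}$. Abbreviating $I=\int_M\sd^2\mu_{f}$ and $D=\int_M\cd^2\mu_{f}$, the identity $\cd^2=1-\delta\sd^2$ supplies the crucial linear relation $D=V_f(M)-\delta I$, which is what will ultimately move the factor $\delta$ into the denominator.

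The heart of the argument is to extract the one-sided estimate
\[
\int_M\trace(S)\cd^2\mu_{f}\leqslant\int_M\|H_S-S(\nabla f)\|\,\sd\cd\,\mu_{f}.
\]
This follows for free from part $(3)$ of Lemma~\ref{lem1}: its left-hand side $\delta\int_M\langle SX^\top,X^\top\rangle\mu_{f}$ is nonpositive, because $\delta\leqslant 0$ and $S$ is positive definite so $\langle SX^\top,X^\top\rangle\geqslant 0$; hence the right-hand side of $(3)$ is nonpositive as well, which after rearrangement is exactly the displayed inequality. I regard this sign bookkeeping as the main point of the proof: it is what lets the curvature term be discarded cleanly, and it is the reason one must invoke part $(3)$ rather than part $(2)$ — working from part $(2)$ only produces the weaker bound in which $\delta$ is absent from the denominator.

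From here the estimate is routine. I would bound the left-hand side below by $\inf(\trace(S))\,D$, and the right-hand side above by factoring out $\|H_S-S(\nabla f)\|_\infty$ and applying the Cauchy-Schwarz inequality in the form $\int_M\sd\cd\,\mu_{f}\leqslant I^{1/2}D^{1/2}$, which gives $\inf(\trace(S))\,D\leqslant\|H_S-S(\nabla f)\|_\infty\,I^{1/2}D^{1/2}$. Cancelling one factor $D^{1/2}$ (legitimate since $\cd\geqslant 1$ forces $D>0$) and squaring yields $D\leqslant\frac{\|H_S-S(\nabla f)\|_\infty^2}{\inf(\trace(S))^2}\,I$.

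Finally I would substitute the linear relation $D=V_f(M)-\delta I$ to obtain $V_f(M)-\delta I\leqslant\frac{\|H_S-S(\nabla f)\|_\infty^2}{\inf(\trace(S))^2}\,I$, that is $V_f(M)\leqslant\bigl(\delta+\frac{\|H_S-S(\nabla f)\|_\infty^2}{\inf(\trace(S))^2}\bigr)I$, and dividing by $V_f(M)$ and by the bracket gives the claim upon recalling $I=\int_M\|X\|^2\mu_{f}$. The only points meriting a word of care are the strict positivity $D>0$ that validates the cancellation, and the positivity of the bracket $\delta+\|H_S-S(\nabla f)\|_\infty^2/\inf(\trace(S))^2$ needed to preserve the inequality direction in the last step; the latter is automatic, since $V_f(M)>0$ and $I>0$ would otherwise contradict $V_f(M)\leqslant(\text{bracket})\,I$.
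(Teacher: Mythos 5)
Your proof is correct, and it arrives at the same pivotal inequality as the paper, namely $V_f(M)\leqslant\bigl(\delta+\|H_S-S(\nabla f)\|_{\infty}^2/\inf(\trace(S))^2\bigr)\int_M\sd^2(r)\mu_{f}$, but by a genuinely shorter route. Both arguments rest on Lemma \ref{lem1}; the difference lies in how the cross term $\int_M\sd\cd\,\mu_{f}$ is controlled. The paper does not simply quote part $(3)$: it starts from $\int_M(\trace(S)-\delta\langle SX^{\top},X^{\top}\rangle)\mu_{f}$, re-expands it through the divergence identity, applies part $(1)$ a second time after multiplying by $\sd(r)$, integrates by parts to produce the term $-\int_M\cd\sd\langle S\nabla^Mr,\nabla^Mr\rangle\mu_{f}$, and only at the very end discards a nonpositive remainder using $\delta\leqslant0$ and the positivity of $S$. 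You instead discard the curvature term at the outset, by observing that the left-hand side of part $(3)$ is nonpositive, and then dispose of $\int_M\sd\cd\,\mu_{f}$ with the integral Cauchy--Schwarz bound $\int_M\sd\cd\,\mu_{f}\leqslant\bigl(\int_M\sd^2\mu_{f}\bigr)^{1/2}\bigl(\int_M\cd^2\mu_{f}\bigr)^{1/2}$ followed by the linear relation $\int_M\cd^2\mu_{f}=V_f(M)-\delta\int_M\sd^2\mu_{f}$. This trades the paper's second pass through the divergence structure for a one-line $L^2$ estimate plus a small quadratic manipulation, and it avoids the pointwise sign bookkeeping entirely. The two points you flag (cancelling $D^{1/2}$, legitimate since $\cd\geqslant1$ gives $D\geqslant V_f(M)>0$ when $\delta\leqslant0$, and the strict positivity of the bracket, forced a posteriori by $V_f(M)>0$) are exactly the right ones; the paper's final division relies on the latter implicitly as well.
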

\begin{proof}
We have
\[
\begin{array}{ll}
\displaystyle\int_M\left(\trace(S)-\delta\langle SX^{\top},X^{\top}\rangle\right) \mu_{f}=\int_M\left(\trace(S)-\ddiv(SX^{\top})\cd(r)\right)\mu_{f}\\
\displaystyle\leqslant\int_M\left(\trace(S)-\trace(S)\cd^2(r)-\langle H_S-S(\nabla f),X\rangle\cd(r)\right)\mu_{f}\\
\displaystyle\leqslant\int_M\left(\delta\trace(S)\sd^2(r)-\langle H_S-S(\nabla f),X\rangle\cd(r)\right)\mu_{f}\\
\displaystyle\leqslant\int_M\left(\delta\trace(S)\sd^2(r)+\frac{\|H_S-S(\nabla f)\|_{\infty}}{\inf(\trace(S))}\int_M\trace(S)\sd(r)\cd(r)\right)\mu_{f} \\

\displaystyle\leqslant\frac{\|H_S-S(\nabla f)\|_{\infty}}{\inf(\trace(S))}
\int_M\left(\sd(r)\ddiv(SX^{\top})
 -\sd(r)\langle H_S-S(\nabla f),X\rangle\right)\mu_{f}  \vspace{.2cm} \\
\hspace{.5cm} + \delta\inf(\trace(S))\int_M\sd^2(r)\mu_{f} \vspace{.2cm} \\
 
\displaystyle\leqslant-\frac{\|H_S-S(\nabla f)\|_{\infty}}{\inf(\trace(S))}
\int_M\left(  \cd(r)\sd(r)\langle S\nabla^Mr,\nabla^Mr\rangle+\sd^2(r)\langle 
H_S-S(\nabla f),\nabla^Mr\rangle\right)\mu_{f} \vspace{.2cm} \\
\hspace{.5cm}
+ \delta\inf(\trace(S))\int_M\sd^2(r)\mu_{f}
\end{array}
\]
\[
\begin{array}{ll}
\displaystyle\leqslant\left(\delta\inf(\trace(S))+\frac{\|H_S-S(\nabla f)\|_{\infty}^2}{\inf(\trace(S))}\right)\int_M\sd^2(r)\mu_{f} \vspace{.2cm} \\
\hspace{2cm}
-\displaystyle\frac{\|H_S-S(\nabla f)\|_{\infty}}{\inf(\trace(S))}\int_M\cd(r)\sd(r)\langle S\nabla^Mr,\nabla^Mr\rangle \mu_{f}.
\end{array}
\]
Hence, we get
\beQ
\int_M\trace(S)\mu_{f}&\leqslant&  \left(\delta\inf(\trace(S))+\frac{\|H_S-S(\nabla f)\|_{\infty}^2}{\inf(\trace(S))}\right)\int_M\sd^2(r)\mu_{f} \\
&&-\frac{\|H_S-S(\nabla f)\|_{\infty}}{\inf(\trace(S))}\int_M\cd(r)\sd(r)\langle S\nabla^Mr,\nabla^Mr\rangle \mu_{f} \\
&& +\delta\int_M\sd^2(r)\langle S\nabla^Mr,\nabla^Mr\rangle \mu_{f}\\
&\leqslant&\left(\delta\inf(\trace(S))+\frac{\|H_S-S(\nabla f)\|_{\infty}^2}{\inf(\trace(S))}\right)\int_M\sd^2(r)\mu_{f}\\
&&+\int_M\left(\delta\sd^2(r)-\cd(r)\sd(r)\frac{\|H_S-S(\nabla f)\|_{\infty}}{\inf(\trace(S))}\right)\langle S\nabla^Mr,\nabla^Mr\rangle \mu_{f}.
\eeQ
Since $\delta\leqslant0$, the second term of the right hand side is nonpositive,
and thus we get
\begin{eqnarray*}
\inf(\trace(S))V_f(M) &\leqslant& \int_M\trace(S)\mu_{f} \\ 
&\leqslant& \left(\delta\inf(\trace(S))+\frac{\|H_S-S(\nabla f)\|_{\infty}^2}{\inf(\trace(S))}\right)\int_M\sd^2(r)\mu_{f},
\end{eqnarray*}
which gives immediately  the result since $\|X\|=\sd(r)$.
\end{proof}

\begin{lemma} \label{lem2}
Let $(\bar{M}^{n+p},\bar{g},\bar{\mu}_{f})$ be a weighted Riemannian 
manifold with sectional curvature $sect_{\bar{M}}\leqslant\delta$, with
$\delta>0$, and $\bar{\mu}_{f}=e^{-f}dv_{\bar{g}}$. Let $(M,g)$ be a 
closed Riemannian manifold isometrically immersed into 
$(\bar{M}^{n+p},\bar{g})$ by $X$ so that $X(M)$ is contained in a 
geodesic ball of radius $\frac{\pi}{2\sqrt{\delta}}$. We endow $M$ with 
the weighted measure $\mu_{f}=e^{-f}dv_g$. Let $S$ be a symmetric, 
divergence-free and positive definite $(1,1)$-tensor on $M$. Then, 
we have
\[
1-\left(\frac {\displaystyle\int_M\cd(r)\mu_{f}}{V_f(M)}\right)^2
\geqslant\frac{1}{1+\dfrac{\displaystyle\int_M\|H_S-
S(\nabla f)\|^2\mu_{f}}{\delta \inf(\trace(S))^2V_f(M)}}.
\]
\end{lemma}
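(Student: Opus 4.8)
The plan is to introduce the weighted mean $m:=\dfrac{1}{V_f(M)}\int_M \cd(r)\,\mu_{f}$ and to produce a \emph{self-improving} inequality for $m^2$ out of two separate one-sided estimates. I first record a geometric observation used repeatedly: since $X(M)$ is contained in a geodesic ball of radius $\frac{\pi}{2\sqrt{\delta}}$, one has $\sqrt{\delta}\,r<\frac{\pi}{2}$ and hence $\cd(r)=\cos(\sqrt{\delta}\,r)>0$ on all of $M$. This positivity is exactly what lets the infimum of $\trace(S)$ be factored out of the relevant integrals.

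For the first estimate I would apply the Cauchy--Schwarz inequality to the pair $\cd(r)$ and $1$ in $L^2(\mu_{f})$, namely $\left(\int_M \cd(r)\,\mu_{f}\right)^2\le V_f(M)\int_M \cd^2(r)\,\mu_{f}$. Dividing by $V_f(M)^2$ and using the identity $\cd^2+\delta\sd^2=1$ from the Preliminaries, this rearranges to
\[
1-m^2\ \ge\ 1-\frac{\int_M \cd^2(r)\,\mu_{f}}{V_f(M)}=\frac{\int_M\big(1-\cd^2(r)\big)\mu_{f}}{V_f(M)}=\frac{\delta\int_M \sd^2(r)\,\mu_{f}}{V_f(M)}.
\]

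For the second estimate I would start from part $(2)$ of Lemma \ref{lem1} with $T=S$. On the left-hand side, the positivity of $\cd(r)$ gives $\int_M\trace(S)\cd(r)\,\mu_{f}\ge \inf(\trace(S))\,V_f(M)\,m$; on the right-hand side I use $\|X\|=\sd(r)$, the pointwise bound $-\langle X,H_S-S(\nabla f)\rangle\le \sd(r)\,\|H_S-S(\nabla f)\|$, and then Cauchy--Schwarz in $L^2(\mu_{f})$. Squaring the resulting chain yields
\[
\int_M \sd^2(r)\,\mu_{f}\ \ge\ \frac{\inf(\trace(S))^2\,V_f(M)^2\,m^2}{\int_M\|H_S-S(\nabla f)\|^2\,\mu_{f}}.
\]

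Finally, I would combine the two displays. Setting $\beta:=\dfrac{\int_M\|H_S-S(\nabla f)\|^2\,\mu_{f}}{\delta\,\inf(\trace(S))^2\,V_f(M)}$, the target inequality is precisely $1-m^2\ge \frac{1}{1+\beta}$. Substituting the second estimate into the first gives $1-m^2\ge \frac{m^2}{\beta}$, i.e. $\beta(1-m^2)\ge m^2$, which rearranges to $m^2\le \frac{\beta}{1+\beta}$, that is $1-m^2\ge \frac{1}{1+\beta}$, as desired. Each of the two estimates is an essentially routine application of Cauchy--Schwarz together with Lemma \ref{lem1}; the step requiring care---and the conceptual heart of the argument---is recognizing that the \emph{same} quantity $m^2$ controls both sides, so that the two bounds close up into the stated inequality. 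One should also dispose of the degenerate case $\int_M\|H_S-S(\nabla f)\|^2\,\mu_{f}=0$ directly, since there the second estimate forces $m=0$ and the claim reduces to $1\ge 1$.
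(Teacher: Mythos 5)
Your proof is correct and rests on exactly the same ingredients as the paper's: part (2) of Lemma \ref{lem1} applied to $S$, the Cauchy--Schwarz inequality (once to compare $\left(\int_M\cd(r)\mu_f\right)^2$ with $V_f(M)\int_M\cd^2(r)\mu_f$ and once on $\int_M\sd(r)\|H_S-S(\nabla f)\|\mu_f$), the identity $\cd^2+\delta\sd^2=1$, and the positivity of $\cd(r)$ guaranteed by the radius hypothesis. The paper organizes the algebra by expanding $(1-\alpha^2)\beta$ and bounding the two occurrences of $\alpha^2$ separately rather than chaining the two estimates into the self-improving inequality $1-m^2\geqslant m^2/\beta$, but the two computations are equivalent term by term, so this is essentially the same proof.
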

\begin{proof}
For a sake of compactness, we will write 
$$\alpha=\frac {\displaystyle\int_M\cd(r)\mu_{f}}{V_f(M)}\quad\text{and}\quad \beta=1+\dfrac{\displaystyle\int_M\|H_S-S(\nabla f)\|^2\mu_{f}}{\delta \inf(\trace(S))^2V_f(M)}.$$
We thus have to show that $(1-\alpha^2)\beta\geqslant 1$. We have
\beQ
(1-\alpha^2)\beta&=&\beta-\left(\frac {\displaystyle\int_M\cd(r)\mu_{f}}{V_f(M)}\right)^2-\left(\frac {\displaystyle\int_M\cd(r)\mu_{f}}{V_f(M)}\right)^2\dfrac{\displaystyle\int_M\|H_S-S(\nabla f)\|^2\mu_{f}}{\delta \inf(\trace(S))^2V_f(M)}\\
&\geqslant&\beta-\left(\frac {\displaystyle\int_M\trace(S)\cd(r)\mu_{f}}{\inf(\trace(S))V_f(M)}\right)^2-\left(\frac {\displaystyle\int_M\cd(r)^2\mu_{f}}{V_f(M)}\right)\dfrac{\displaystyle\int_M\|H_S-S(\nabla f)\|^2\mu_{f}}{\delta \inf(\trace(S))^2V_f(M)}\\
&\geqslant&\beta-\left(\frac{\displaystyle \int_M \sd(r)\|H_S-S(\nabla f)\|\mu_{f}}{\inf(\trace(S))V_f(M)}\right)^2-\left(\frac {\displaystyle\int_M\cd(r)^2\mu_{f}}{V_f(M)}\right)\dfrac{\displaystyle\int_M\|H_S-S(\nabla f)\|^2\mu_{f}}{\delta \inf(\trace(S))^2V_f(M)}\\
&\geqslant&\beta-\dfrac{ \left(\displaystyle\int_M \sd^2(r)\mu_{f}\right)\left(\displaystyle\int_M\|H_S-S(\nabla f)\|^2\mu_{f}\right)}
{\inf(\trace(S))^2V_f(M)^2} \\
&&-\left(\frac {\displaystyle\int_M\cd(r)^2\mu_{f}}{V_f(M)}\right)\dfrac{\displaystyle\int_M\|H_S-S(\nabla f)\|^2\mu_{f}}{\delta \inf(\trace(S))^2V_f(M)}\\
&\geqslant&\beta-\dfrac{ \displaystyle\int_M\|H_S-S(\nabla f)\|^2\mu_{f}}{\delta\inf(\trace(S))^2V_f(M)^2}\left(\int_M(\sd^2(r)+\delta^2\cd^2(r))\mu_{f}\right)\\
&=&\beta-\dfrac{ \displaystyle\int_M\|H_S-S(\nabla f)\|^2\mu_{f}}{\delta\inf(\trace(S))^2V_f(M)}\\
&=&1,
\eeQ
and this concludes the proof.
\end{proof}

\section{Proofs of the Theorems}

\begin{proof}[Proof of Theorem \ref{thm1}]
Case $\delta\leqslant0$.
Let $p\in\overline{M}$ be a fixed point and let $\{x_1,\cdots,x_N\}$ be the 
normal coordinates of $\overline{M}$ centered at $p$. For any 
$x\in\overline{M}$, $r(x)$ is the geodesic distance between $p$ and $x$ 
over $\overline{M}$. We want to use as test functions the functions
\[
\frac{\sd(r)}{r}x_i,
\]
for $1\leqslant i\leqslant N$. For this purpose, we will 
choose $p$ as the center of mass of $M$ with respect for the measure 
$\mu_f$, that is, $p$ is the only point in $M$ so that 
$$
\int_M\dfrac{\sd(r)}{r}x_i\mu_f=0,
$$
for any $i\in\{1,\cdots,N\}$. Note at that point that we assume that $M$ 
is contained in a ball of radius $\dfrac{\pi}{4\sqrt{\delta}}$ when $\delta$ 
is positive allows us to ensure that $M$ is contained in a ball of radius 
$\dfrac{\pi}{4\sqrt{\delta}}$ centered at $p$. This holds also for Theorems 
\ref{thm2} and \ref{thm3}.
These functions are candidates for test functions since they are $L^2(\mu_f)$-orthogonal to the constant functions which are the eigenfunctions for the first eigenvalue $\lambda_0=0$. Thus, we have
\beqt\label{quotient1}
\lambda_1\int_M\sum_{i=1}^N\frac{\sd^2(r)}{r^2}x_i^2\mu_f\leqslant\int_M\sum_{i=1}^N\left\langle T\nabla\left(\frac{\sd(r)}{r}x_i\right),\nabla\left(\frac{\sd(r)}{r}x_i\right)\right\rangle\mu_f.
\eeqt
We recall that Grosjean proved in \cite[Lemma 2.1]{Gr1} that 
\beqt\label{ineqgrosjean}
\sum_{i=1}^N\left\langle T\nabla\left(\frac{\sd(r)}{r}x_i\right),\nabla\left(\frac{\sd(r)}{r}x_i\right)\right\rangle\leqslant \trace(T)-\delta\left \langle TX^{\top},X^{\top}\right\rangle.
\eeqt
Moreover, by Lemma \ref{lem1}, item (3), we have
$$\delta\displaystyle\int_M\langle TX^{\top},X^{\top}\rangle \mu_{f}\geqslant \displaystyle\int_M\trace(T)\cd^2(r)\mu_{f}-\int_M\|H_T-T(\nabla f)\|\sd(r)\cd(r)\mu_{f}$$
which together with \eqref{quotient1} and \eqref{ineqgrosjean} gives
\beQ
\lambda_1\int_M\sd^2(r)\mu_f&\leqslant&\int_M\Big(\trace(T)-\cd^2(r)\trace(T)+\|H_T-T(\nabla f)\|\sd(r)\cd(r)\Big)\mu_{f}\\
&\leqslant&\delta\int_M\sd^2(r)\trace(T)\mu_f+\int_M\|H_T-T(\nabla f)\|\sd(r)\cd(r)\mu_{f},
\eeQ
where we have used $\cd^2+\delta\sd^2=1$. Hence, we obtain
\beq\label{majlambda1}
\lambda_1\int_M\sd^2(r)\mu_f&\leqslant&\delta\int_M\sd^2(r)\trace(T)\mu_f\\
&&+\sup_{M}\left(\frac{\|H_T-T(\nabla f)\|}{\trace(S)}\right)\int_M\trace(S)\sd(r)\cd(r)\mu_{f}\nonumber.
\eeq

Now, we claim the following.

\begin{lemma} \label{lemsd}
We have
$$\displaystyle\int_M\trace(S)\sd(r)\cd(r)\mu_f\leqslant \int_M\|H_S-S(\nabla f)\|\sd^2(r)\mu_f.$$
\end{lemma}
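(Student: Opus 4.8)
The plan is to apply the first part of Lemma~\ref{lem1} to the tensor $S$, multiply the resulting pointwise inequality by the nonnegative weight $\sd(r)$, integrate against $\mu_f$, and then convert the divergence integral via the weighted integration by parts formula \eqref{IPP} into a term of favourable sign. First I would note that on the region where $M$ lives (either $\delta\leqslant 0$, or $\delta>0$ together with the radius bound, which forces $\sqrt{\delta}\,r<\frac{\pi}{4}$) both $\sd(r)\geqslant 0$ and $\cd(r)\geqslant 0$. Applying Lemma~\ref{lem1}(1) with $T=S$ and multiplying by $\sd(r)\geqslant 0$ gives
\[
\sd(r)\,\ddiv_f(SX^{\top})\geqslant \trace(S)\sd(r)\cd(r)+\sd(r)\langle X,H_S-S(\nabla f)\rangle,
\]
and integrating over $M$ against $\mu_f$ yields
\[
\int_M\sd(r)\,\ddiv_f(SX^{\top})\mu_f\geqslant\int_M\trace(S)\sd(r)\cd(r)\mu_f+\int_M\sd(r)\langle X,H_S-S(\nabla f)\rangle\mu_f.
\]

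Next I would evaluate the left-hand side by integration by parts. Using \eqref{IPP} with $u=\sd(r)$ and $Y=SX^{\top}$, together with the identities $\nabla\sd(r)=\cd(r)\nabla^M r$ and $X^{\top}=\sd(r)\nabla^M r$, I obtain
\[
\int_M\sd(r)\,\ddiv_f(SX^{\top})\mu_f=-\int_M\cd(r)\sd(r)\langle S\nabla^M r,\nabla^M r\rangle\mu_f.
\]
Since $S$ is positive definite and $\cd(r),\sd(r)\geqslant 0$, this integral is nonpositive. Substituting it back into the previous inequality and bounding $-\int_M\cd(r)\sd(r)\langle S\nabla^M r,\nabla^M r\rangle\mu_f\leqslant 0$ leaves
\[
\int_M\trace(S)\sd(r)\cd(r)\mu_f\leqslant -\int_M\sd(r)\langle X,H_S-S(\nabla f)\rangle\mu_f.
\]

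Finally I would estimate the right-hand side by Cauchy-Schwarz in the ambient metric. Because $\|X\|=\sd(r)$, we have $-\langle X,H_S-S(\nabla f)\rangle\leqslant\sd(r)\,\|H_S-S(\nabla f)\|$, so that
\[
\int_M\trace(S)\sd(r)\cd(r)\mu_f\leqslant\int_M\|H_S-S(\nabla f)\|\sd^2(r)\mu_f,
\]
which is the claim. The step I expect to require the most care -- and the main obstacle -- is the sign bookkeeping: one must confirm that $\sd(r)$ and $\cd(r)$ are nonnegative throughout the relevant geodesic ball, so that multiplying by $\sd(r)$ preserves the inequality and so that the positive-definiteness of $S$ forces the integration-by-parts term to drop out in the correct direction. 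Everything else is a direct application of Lemma~\ref{lem1}, the formula \eqref{IPP}, and Cauchy-Schwarz.
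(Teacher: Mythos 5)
Your proof is correct and follows essentially the same route as the paper's: apply Lemma~\ref{lem1}(1) to $S$, multiply by $\sd(r)$, integrate by parts via \eqref{IPP} to produce the term $-\int_M\cd(r)\sd(r)\langle S\nabla^M r,\nabla^M r\rangle\mu_f\leqslant 0$, and finish with Cauchy--Schwarz using $\|X\|=\sd(r)$. In fact your bookkeeping is slightly more careful than the paper's writeup, which drops a factor of $\sd(r)$ in two displayed lines (a typo), whereas you correctly carry the $\sd^2(r)$ through to the stated conclusion.
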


Reporting this into \eqref{majlambda1}, we get
\[
\lambda_1\int_M\sd^2(r)\mu_f \leqslant \int_M\Bigg[\delta\trace(T)+\sup_{M}\left(\frac{\|H_T-T(\nabla f)\|}{\trace(S)}\right)\|H_S-S(\nabla f)\|\Bigg]\sd^2(r)\mu_{f}.
\]
which gives immediately the desired estimate
$$\lambda_1\leqslant\sup_M\left[\delta\trace(T)+\sup_M\left( \frac{\|H_T-T\nabla f\|}{\trace(S)}\right)\|H_S-S\nabla f\|\right].$$
This concludes the proof for the case $\delta<0$, up to the proof of the 
lemma that we give now.

\begin{proof}[Proof of Lemma \ref{lemsd}]
Multiplying the first part of Lemma \ref{lem1} for the tensor $S$ by $\sd(r)$, 
we get
\[
\ddiv_f(SX^{\top})\sd(r)\geqslant \trace(S)\cd(r)\sd(r)-
\langle X,H_S-S(\nabla f)\rangle,
\]
and thus, by the Cauchy-Schwarz inequality, we have
\[
\ddiv_f(SX^{\top})\sd(r)\geqslant \trace(S)\cd(r)\sd(r)-\|H_S-S(\nabla f)\|\sd(r).
\]
Now, integrating this relation and using the integration by parts in the
formula \eqref{IPP}, we get
\beQ
\int_M\trace(S)\cd(r)\sd(r)\mu_f-\int_M\|H_S-S(\nabla f)\|\sd(r)\mu_f&\leqslant& -\int_M\langle \nabla \sd(r),SX^{\top}\rangle \mu_f\\
&\leqslant&-\int_M\cd(r)\sd(r)\langle \nabla r,S\nabla\rangle\\
&\leqslant&0,
\eeQ
since $S$ is positive. This concludes the proof of Lemma \ref{lemsd}.
\end{proof}
Case $\delta>0$.
Like in the case $\delta<0$, we use $\frac{\sd(r)}{r}x_i$, $1\leqslant i\leqslant N$, as test functions. Using \eqref{ineqgrosjean} again, we get
\beqt\label{majlambda13}
\lambda_1\int_M\sd^2(r)\mu_f\leqslant\int_M\left(\trace(T)-\delta\left \langle TX^{\top},X^{\top}\right\rangle\right)\mu_f.
\eeqt
On the other, we use another test function in this case, namely $\cd(r)-\overline{\cd}$ where for more convenience, we have denoted by $\overline{\cd}$ the mean value of $\cd(r)$, that is $\overline{\cd}=\dfrac{1}{V_f(M)}\int_M\cd(r)\mu_f$. 
Here again, this function is $L^2(\mu_f)$-orthogonal to the constant functions,
so it is a candidate for being a test function. Hence, we have
\beQ
\lambda_1\int_M\left(\cd(r)-\overline{\cd}\right)^2\mu_f&\leqslant&\int_M\left\langle T\nabla \left(\cd(r)-\overline{\cd}\right), \nabla\left(\cd(r)-\overline{\cd}\right)\right\rangle\mu_f\\
&\leqslant&\int_M\left\langle T\nabla\cd(r), \nabla\cd(r)\right\rangle\mu_f\\
&\leqslant&\delta^2\int_M\sd^2(r)\left\langle T\nabla r,\nabla r\right\rangle\mu_f\\
&\leqslant&\delta^2\int_M\left\langle TX^{\top},X^{\top}\right\rangle\mu_f.
\eeQ
From this, we deduce immediately that
\beqt\label{majlambda14}
\lambda_1\int_M\cd^2(r)\mu_f\leqslant\delta^2\int_M\left\langle TX^{\top},X^{\top}\right\rangle+\frac{\lambda_1}{V_f(M)}\left(\int_M\cd(r)\mu_f\right)^2.
\eeqt
Now, using the fact that $\cd^2+\delta\sd^2=1$, \eqref{majlambda14} plus $\delta$ times \eqref{majlambda13} gives
$$ \lambda_1V_f(M)\leqslant \delta\int_M\trace(T)\mu_f+\left(\dfrac{\displaystyle\int_M\cd(r)\mu_f}{V_f(M)}\right)^2$$
and thus
\[
\lambda_1V_f(M)\left( 1-\left(\frac{\int_M\cd(r)\mu_f}{V_f(M)}\right)^2\right)
\leqslant\delta\int_M\trace(T)\mu_f.
\]
Now, we conclude by using Lemma \ref{lem2} to get the desired upper 
bound
\[
\lambda_1\leqslant\dfrac{\left( \displaystyle\int_M\trace(T)
\mu_{f}\right)}{V_f(M)}\left(\delta  +\frac{ \displaystyle
\int_M\|H_S-S(\nabla f)\|^2\mu_{f}}{V_f(M)\inf\left(\trace(S)^2\right)}\right),
\]
and this concludes the proof of Theorem \ref{thm1}.
\end{proof}

\begin{proof}[Proof of Theorem \ref{thm2}]
{\it Case $\delta\leqslant0$.} Like in the proof of Theorem \ref{thm1}, we 
will consider $p\in\overline{M}$ as the center of mass of $\Omega$, 
$\{x_1,\ldots,x_N\}$ the normal coordinates of $\overline{M}$ centered
at $p$ and by $r(x)$ the geodesic distance (on $\overline{M}$) between 
$x$ and $p$, for any $x\in\overline{M}$. By the choice of $p$, we have 
\[
\int_M\dfrac{\sd(r)}{r}x_i\widetilde{\mu}_f=0,
\]
for any $i\in\{1,\ldots,N\}$, and we can use the functions 
$\frac{\sd(r)}{r}x_i$ as test functions in the variational characterization
of $\sigma_1$. Thus, we have
\begin{eqnarray} \label{quotientsigma1}
\begin{aligned}
\sigma_1\int_M\sum_{i=1}^N\frac{\sd^2(r)}{r^2}x_i^2\widetilde{\mu}_f
\leqslant&\int_{\Omega}\sum_{i=1}^N\left\langle T\nabla\left(\frac{\sd(r)}{r}x_i\right),\nabla\left(\frac{\sd(r)}{r}x_i\right)\right\rangle\mu_f \\
\leqslant&\int_{\Omega} \left(\trace(T)-\delta\left \langle TX^{\top},X^{\top}\right\rangle\right)\mu_f,
\end{aligned}
\end{eqnarray}
where we have used inequality \eqref{ineqgrosjean} to get the second line. 
Here, $X=\sd(r)\overline{\nabla} r$ and $X^{\top}=\sd(r)\nabla r$ is the
tangent component of $X$ to $\Omega$. On the other hand, by item (3) 
of Lemma \ref{lem1} applied to $\overline{M}$, we have
\[
\delta\displaystyle\int_{\Omega}\langle TX^{\top},X^{\top}\rangle \mu_{f}
\geqslant \displaystyle\int_{\Omega}\trace(T)\cd^2\mu_{f}-
\int_{\Omega}\|H_T-T(\nabla f)\|\sd\cd\mu_{f}.
\]
Reporting into \eqref{quotientsigma1}, we have
\beQ
\sigma_1\int_M\sd^2(r)\widetilde{\mu}_f&\leqslant&\int_{\Omega}\Big(\trace(T)-\cd^2(r)\trace(T)+\|H_T-T(\nabla f)\|\sd(r)\cd(r)\Big)\mu_{f}\\
&\leqslant&\delta\int_{\Omega}\sd^2(r)\trace(T)\mu_f+\int_{\Omega}\|H_T-T(\overline{\nabla} f)\|\sd(r)\cd(r)\mu_{f},\\
&\leqslant&\delta\int_{\Omega}\sd^2(r)\trace(T)\mu_f+\sup_{\Omega}\left(\frac{\|H_T-T(\overline{\nabla} f)\|}{\trace(T)}\right)\int_{\Omega}\trace(T)\sd(r)\cd(r)\mu_{f}.
\eeQ
From Lemma \ref{lemsd} applied on $\Omega$ for the tensor $T$, we have
$$\displaystyle\int_{\Omega}\trace(T)\sd(r)\cd(r)\mu_f\leqslant \int_{\Omega}\|H_T-T(\nabla f)\|\sd^2(r)\mu_f,$$
which yields to
\beQ
\sigma_1\int_M\sd^2(r)\widetilde{\mu}_f&\leqslant&\int_{\Omega}\sd^2(r)\left[\delta\trace(T)+\sup_{\Omega}\left( \frac{\|H_T-T(\nabla f)\|}{\trace(T)}\right)\|H_T-T(\nabla f)\|\right]\mu_f.
\eeQ
Moreover, from the assumption that $\Omega$ is contained in the ball of 
radius $B(p,R)$, we get that $\sd(r)\leqslant\sd(R)$ and thus
\[
\sigma_1\int_M\sd^2(r)\widetilde{\mu}_f 
\leqslant
\sd^2(R)V_f(\Omega)\sup_{\Omega}\left[\delta\trace(T)+
\sup_{\Omega}\left( \frac{\|H_T-T(\nabla f)\|}{\trace(T)}\right)
 t\|H_T-T(\nabla f)\|\right].
\]
Finally, by Lemma \ref{lem2}, we have
$$\dfrac{\displaystyle\int_M\sd(r)^2\widetilde{\mu}_{f}}{V_f(M)}\geqslant\dfrac{1}{\delta+\dfrac{\sup_M\|H_S-S(\widetilde{\nabla} f)\|^2}{\inf_M(\trace(S))^2}},$$
which give the desired estimate when $\delta\leqslant 0$:

\begin{eqnarray*}
\sigma_{1}\leqslant \sup_{\Omega}\left[ \delta\trace(T)+\sup_{\Omega}\left( \frac{\|H_T-T(\nabla f)\|}{\trace(T)}\right)\|H_T-T(\overline{\nabla} f)\|\right]\\ \times\left[\delta+\dfrac{\sup_M\|H_S-S(\widetilde{\nabla} f)\|^2}{\inf_M(\trace(S))^2}\right]\frac{V_f(\Omega)}{V_f(M)}\sd^2(R).
\end{eqnarray*}
{\it Case $\delta>0$.}
Like in the case $\delta\leqslant 0$, we have
\beqt\label{majsigma13}
\sigma_1\int_{M}\sd^2(r)\widetilde{\mu}_f\leqslant\int_{\Omega}\left(\trace(T)-\delta\left \langle TX^{\top},X^{\top}\right\rangle\right)\mu_f
\eeqt
by using $\frac{\sd(r)}{r}x_i$, $1\leqslant i\leqslant N$ as test functions. 
In addition, we also use another test function, $\cd(r)-\widetilde{\cd}$,
with $\widetilde{\cd}=\dfrac{1}{V_f(M)}\int_M\cd(r)\widetilde{\mu}_f$. By a computation analogue to the proof of Theorem \ref{thm1}, we have
\beQ
\sigma_1\int_M\left(\cd(r)-\widetilde{\cd}\right)^2\widetilde{\mu}_f&\leqslant&\int_{\Omega}\left\langle T\nabla \left(\cd(r)-\widetilde{\cd}\right), \nabla\left(\cd(r)-\widetilde{\cd}\right)\right\rangle\mu_f\\
&\leqslant&\int_{\Omega}\left\langle T\nabla\cd(r), \nabla\cd(r)\right\rangle\overline{\mu}_f\\
&\leqslant&\delta^2\int_{\Omega}\sd^2(r)\left\langle T\nabla r,\nabla r\right\rangle\mu_f\\
&\leqslant&\delta^2\int_{\Omega}\left\langle TX^{\top},X^{\top}\right\rangle\mu_f.
\eeQ
From this, we deduce 
\beqt\label{majsigma14}
\sigma_1\int_M\cd^2(r)\widetilde{\mu}_f\leqslant\delta^2\int_{\Omega}\left\langle TX^{\top},X^{\top}\right\rangle\mu_f+\frac{\sigma_1}{V_f(M)}\left(\int_M\cd(r)\widetilde{\mu}_f\right)^2.
\eeqt
Now, using the fact that $\cd^2+\delta\sd^2=1$, \eqref{majsigma14} plus $\delta$ times \eqref{majsigma13} gives
$$ \sigma_1V_f(M)\leqslant \delta\int_{\Omega}\trace(T)\mu_f+\left(\dfrac{\displaystyle\int_M\cd(r)\widetilde{\mu}_f}{V_f(M)}\right)^2$$
and so
$$\sigma_1V_f(M)\left( 1-\left(\frac{\int_M\cd(r)\widetilde{\mu}_f}{V_f(M)}\right)^2\right)\leqslant\delta\int_{\Omega}\trace(T)\mu_f.$$
Finally, since we have
\[
1-\left(\frac {\displaystyle\int_M\cd(r)\widetilde{\mu}_{f}}{V_f(M)}\right)^2\geqslant\frac{1}{1+\dfrac{\displaystyle\int_M\|H_S-S(\widetilde{\nabla} f)\|^2\widetilde{\mu}_{f}}{\delta \inf(\trace(S))^2V_f(M)}},
\]
by Lemma \ref{lem2}, we get
\[
\sigma_{1}\leqslant \frac{\displaystyle \int_{\Omega}\trace(T)\mu_{f}}{V_f(M)}\left(\delta  +\frac{ \displaystyle\int_{M}\|H_S-S(\widetilde{\nabla} f)\|^2\widetilde{\mu}_f}{V_f(M)\inf\left(\trace(S)\right)^2}\right),
\]
and this concludes the proof.
\end{proof}

\begin{proof}[Proof of Theorem \ref{thm3}]
Here again we consider differently the two cases. \\{\it Case $\delta\leqslant0$.}
First, we recall the variational characterization of $\alpha_1$ (see \cite{DKL})
 \begin{equation}\label{characalpha1}
 \alpha_1=\inf\left\{\dfrac{\displaystyle\int_{\Omega}\|\overline{\nabla} u\|^2dv_{\overline{g}}+b\int_{M}\|\nabla u\|^2dv_{\widetilde{g}}}{\displaystyle\int_{M}u^2dv_{\widetilde{g}}}\ \Bigg|\int_{M}udv_{\widetilde{g}}=0\right\}.
 \end{equation}
As in the proof of Theorem \ref{thm2}, we use $\dfrac{\sd(r)}{r}x_i$ as test functions, where $r$ is the geodesic distance to the center of mass $p$ of $\Omega$ and $\{x_1,\cdots,x_N\}$ the normal coordinates of $\overline{M}$ centered at $p$. Hence, we have
\begin{eqnarray*}
\alpha_1\int_M\sum_{i=1}^N\frac{\sd^2(r)}{r^2}x_i^2dv_{\widetilde{g}}
\leqslant
\int_{\Omega}\sum_{i=1}^N\left\|\overline{\nabla}\left(\frac{\sd(r)}{r}x_i\right)\right\|dv_{\overline{g}}+b\int_{M}\sum_{i=1}^N\left\|\nabla\left(\frac{\sd(r)}{r}x_i\right)\right\|dv_{\widetilde{g}}
\end{eqnarray*}
that is, 
\begin{eqnarray} \label{quotientalpha1}
\begin{aligned}
\alpha_1\int_M\sd(r)^2dv_{\widetilde{g}} & \leqslant 
\int_{\Omega} \left(n-\delta\|X^{\top}\|^2\right)dv_{\overline{g}} \\
&+ b\int_{M} \left((n-1)-\delta\|X^{\widetilde{\top}}\|^2\right)dv_{\widetilde{g}},
\end{aligned}
\end{eqnarray}
where we use use inequality \eqref{ineqgrosjean} twice, once on $\Omega$ 
for the first term and once on $M$ for the second term. Moreover we have, 
$\|X^{\widetilde{\top}}\|\leqslant\|X^{\top}\|\leqslant \|X\|=\sd(r)$ and since 
$\delta$ is nonpositive, $\sd$ is an increasing function. So, by the 
assumption that $\Omega$ is contained in the ball $B(p,R)$, we have
\begin{eqnarray} \label{quotientalpha2}
\begin{aligned}
\alpha_1\int_M\sd^2(r)dv_{\widetilde{g}} &\leqslant
\left(n-\delta\sd(R)^2\right)V(\Omega)+b\left(n-1)-\delta\sd(R)\right)V(M) \\
&\leqslant \Big(nV(\Omega)+b(n-1)V(M)\Big)
-\delta\sd(R)^2\Big(V(\Omega)+bV(M)\Big).
\end{aligned}
\end{eqnarray}
We conclude by applying Lemma \ref{lem2}, which says for $f=0$,
$$\dfrac{\displaystyle\int_M\sd(r)^2dv_{\widetilde{g}}}{V(M)}\geqslant\dfrac{1}{\delta+\dfrac{\sup_M\|H_S\|}{\inf_M(\trace(S))^2}},$$
to obtain the desired estimate when $\delta\leqslant 0$, that is,
$$\alpha_1\leqslant \Big[n\frac{V(\Omega)}{V(M)}+b(n-1)-\delta\sd^2(R)\left(\frac{V(\Omega)}{V(M)}+b\right)\Big]\left(\delta+\dfrac{\sup_M\|H_S\|^2}{\inf_M(\trace(S))^2}\right).$$
{\it Case $\delta>0$.} Like in the cas $\delta\leqslant 0$, using the functions $\dfrac{\sd(r)}{r}x_i$ as test functions in the variational characterization of $\alpha_1$, we get
\begin{equation}\label{alphasd}
\alpha_1\int_M\sd^2(r)dv_{\widetilde{g}}\leqslant\int_{\Omega} \left(n-\delta\|X^{\top}\|^2\right)dv_{\overline{g}}+b\int_{M} \left((n-1)-\delta\|X^{\widetilde{\top}}\|^2\right)dv_{\widetilde{g}}.
\end{equation}
Moreover, we use another test function, $\cd(r)-\widetilde{\cd}$ with 
$\widetilde{\cd}=\dfrac{1}{V(M)}\int_M\cd(r)dv_{\widetilde{g}}$.
\beQ
\alpha_1\int_M\left(\cd(r)-\widetilde{\cd}\right)^2dv_{\widetilde{g}} &\leqslant&\int_{\Omega}\left\langle \nabla \left(\cd(r)-\widetilde{\cd}\right), \nabla\left(\cd(r)-\widetilde{\cd}\right)\right\rangle dv_g \\
&&+ b\int_M\left\langle \widetilde{\nabla} \left(\cd(r)-\widetilde{\cd}\right), \widetilde{\nabla}\left(\cd(r)-\widetilde{\cd}\right)\right\rangle dv_{\widetilde{g}}\\
&\leqslant&\int_{\Omega}\left\langle \nabla\cd(r), \nabla\cd(r)\right\rangle dv_g+b\int_M\left\langle \widetilde{\nabla} \cd(r), \widetilde{\nabla}\cd(r)\right\rangle dv_{\widetilde{g}}\\
&\leqslant&\delta^2\int_{\Omega}\sd^2(r)\left\langle \nabla r,\nabla r\right\rangle dv_g+b\delta^2\int_M\sd^2(r)\left\langle \widetilde{\nabla} r,\widetilde{\nabla} r\right\rangle dv_{\widetilde{g}}\\
&\leqslant&\delta^2\int_{\Omega}\|X^{\top}\|^2 dv_g+b\delta^2\int_M\|X^{\widetilde{\top}}\|^2 dv_{\widetilde{g}},
\eeQ
where $X^{\top}=\sd(r)\nabla r$ is the tangent component of $X$ to
$\Omega$ and $X^{\widetilde{\top}}=\sd(r)\widetilde{\nabla} r$ is the 
part of $X$ tangent to $M$. From this, we get
\begin{equation} \label{alphacd}
\begin{aligned}
\alpha_1\int_M\cd(r)^2dv_{\widetilde{g}} \leqslant& 
\delta^2\int_{\Omega}\|X^{\top}\|^2 dv_g+b\delta^2
\int_M\|X^{\widetilde{\top}}\|^2 dv_{\widetilde{g}} \\
+& \dfrac{\alpha_1}{V(M)}\left(\int_M\cd(r)dv_{\widetilde{g}} \right)^2.
\end{aligned}
\end{equation}
Hence summing \eqref{alphacd} and $\delta$ times \eqref{alphasd}, using the fact that $\cd^2+\delta\sd^2=1$, gives
$$\alpha_1V(M)\leqslant \delta\Big( nV(\Omega)+b(n-1)V(M) \Big)+\dfrac{\alpha_1}{V(M)}\left(\int_M\cd(r)dv_{\widetilde{g}} \right)^2,$$
and so
$$\alpha_1V(M)\left(1-\left(\dfrac{1}{V(M)}\int_M\cd(r)dv_{\widetilde{g}} \right)^2\right)\leqslant \delta\Big( nV(\Omega)+b(n-1)V(M) \Big).$$
Moreover, from we have Lemma \ref{lem2} (with $f=0$), we have
$$1-\left(\frac {\displaystyle\int_M\cd(r)dv_{\widetilde{g}}}{V(M)}\right)^2\geqslant\frac{1}{1+\dfrac{\displaystyle\int_M\|H_S\|^2dv_{\widetilde{g}}}{\delta \inf(\trace(S))^2V(M)}}$$
which gives
$$\alpha_{1}V(M)\leqslant \delta\Big( nV(\Omega)+b(n-1)V(M) \Big)\left(\delta  +\frac{ \displaystyle\int_{M}\|H_S\|^2dv_{\widetilde{g}}}{V(M)\inf\left(\trace(S)\right)^2}\right)$$
and finally the desired estimate
\[
\alpha_1\leqslant \left( n\frac{V(\Omega)}{V(M)}+b(n-1)\right)
\left(\delta  +\frac{ \displaystyle\int_{M}\|H_S\|^2dv_{\widetilde{g}}}{V(M)
\inf\left(\trace(S)^2\right)}\right).
\]
This concludes the proof.
\end{proof}

\section{A remark about the case $\delta>0$}

The aim of this section is to compare the estimates in both case $\delta>0$
and $\delta\leqslant0$. Indeed, in the estimates of Theorems \ref{thm2}, the 
radius $R$ of a ball containing $\Omega$ appears when $\delta\leqslant0$
but not for the estimates for $\delta>0$. It turns out that when $\delta>0$, 
we can bound the radius $R$ from above in terms of $H_T$ and $\trace{T}$
and so obtain upper bounds comparable to those obtained for 
$\delta\leqslant0$. First, we have the following

\begin{prop}
Let $(\bar{M}^{n+p},\bar{g},\bar{\mu}_{f})$ be a weighted Riemannian 
manifold with sectional curvature $sect_{\bar{M}}\leqslant\delta$, with 
$\delta>0$, and $\bar{\mu}_{f}=e^{-f}dv_{\bar{g}}$. Let $(\Omega,g)$ 
be a compact Riemannian manifold with non-empty boundary $M$ 
isometrically immersed into $(\bar{M}^{n+p},\bar{g})$ by $X$. We 
endow $\Omega$ with the weighted measure $\mu_{f}=e^{-f}dv_g$.
Let $T$ be a symmetric, divergence-free and positive definite 
$(1,1)$-tensor on $\Omega$. If $\Omega$ is contained a ball of 
radius $R$, then 
\[
\sd^2(R)\left(\dfrac{\|H_T-T\nabla f\|_{\infty}^2}{\displaystyle\inf_{\Omega}(\trace(T))^2}+\delta \right)\geqslant 1.
\]
\end{prop}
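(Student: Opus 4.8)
The plan is to run the argument of Lemma \ref{lem2} directly on $\Omega$, with the tensor $S$ there replaced by our $T$, and then to feed in the ball containment. Choose the base point $p$ to be the center of a ball $B(p,R)\supset X(\Omega)$, so that $r(x)=\bar d(x,p)\leqslant R$ on $\Omega$; we may of course take $R\leqslant\frac{\pi}{2\sqrt{\delta}}$, in line with the standing hypothesis that $\Omega$ lies in a ball of that radius. Applying Lemma \ref{lem2} to $(\Omega,g)$ and $T$ then yields
\[
1-\left(\frac{\displaystyle\int_\Omega\cd(r)\mu_{f}}{V_f(\Omega)}\right)^2
\geqslant\frac{1}{1+\dfrac{\displaystyle\int_\Omega\|H_T-T(\nabla f)\|^2\mu_{f}}{\delta\,\inf_\Omega(\trace(T))^2\,V_f(\Omega)}}.
\]

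Write $\tau=\inf_\Omega(\trace(T))$ and $h=\|H_T-T(\nabla f)\|_\infty$. First I would bound the right-hand side from below: since $\int_\Omega\|H_T-T(\nabla f)\|^2\mu_{f}\leqslant h^2V_f(\Omega)$, the denominator is at most $1+\frac{h^2}{\delta\tau^2}$, so the right-hand side is at least $\frac{\delta\tau^2}{\delta\tau^2+h^2}$. Next I would bound the left-hand side from above using the containment. Because $r\leqslant R\leqslant\frac{\pi}{2\sqrt{\delta}}$ on $\Omega$ and $\cd=\cos(\sqrt{\delta}\,\cdot)$ is nonincreasing on $[0,R]$, we have $\cd(r)\geqslant\cd(R)\geqslant0$ pointwise, hence $\frac{1}{V_f(\Omega)}\int_\Omega\cd(r)\mu_{f}\geqslant\cd(R)$ and therefore
\[
1-\left(\frac{\displaystyle\int_\Omega\cd(r)\mu_{f}}{V_f(\Omega)}\right)^2\leqslant 1-\cd^2(R)=\delta\sd^2(R),
\]
where the last equality is the identity $\cd^2+\delta\sd^2=1$. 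Combining the two bounds gives $\delta\sd^2(R)\geqslant\frac{\delta\tau^2}{\delta\tau^2+h^2}$, and since $\delta>0$ I may cancel it and rearrange to obtain exactly $\sd^2(R)\bigl(\delta+\frac{h^2}{\tau^2}\bigr)\geqslant1$, which is the asserted inequality.

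The routine part is the two one-sided estimates above; the step that really needs care is the very first one. Lemma \ref{lem2} is stated for a \emph{closed} manifold, whereas here $\Omega$ has nonempty boundary $M=\partial\Omega$. Thus the genuine content is to verify that the chain of integrations by parts underlying Lemma \ref{lem2} (namely the weighted divergence theorem \eqref{intdiv}, Lemma \ref{lem1} and Lemma \ref{lemsd}) still produces the same inequality on $\Omega$, i.e. that the boundary contributions along $M$ either vanish or enter with a favorable sign. I expect this to be the main obstacle: once the inequality of Lemma \ref{lem2} is available on $\Omega$, everything downstream is the elementary combination presented above, with the crucial use of $\delta>0$ being both the sign needed to cancel $\delta$ at the end and the positivity needed for $\cd(R)\geqslant0$.
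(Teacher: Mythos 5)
Your argument is arithmetically sound and does reach the stated inequality, but it takes a genuinely different and heavier route than the paper. You derive the result from Lemma \ref{lem2} applied on $\Omega$ with $S=T$, and then combine the crude bounds $\frac{1}{V_f(\Omega)}\int_\Omega\cd(r)\mu_f\geqslant\cd(R)$ and $\int_\Omega\|H_T-T\nabla f\|^2\mu_f\leqslant\|H_T-T\nabla f\|_\infty^2V_f(\Omega)$. The paper does not go through Lemma \ref{lem2} at all: it starts from part (2) of Lemma \ref{lem1} applied on $\Omega$, i.e. $\int_\Omega\cd(r)\trace(T)\mu_f\leqslant\int_\Omega\|H_T-T\nabla f\|\sd(r)\mu_f$ after Cauchy--Schwarz, then uses the monotonicity of $\cd$ (decreasing) and $\sd$ (increasing) on $[0,\tfrac{\pi}{2\sqrt{\delta}}]$ to extract the scalar inequality $\cd(R)\inf_\Omega(\trace(T))\leqslant\sd(R)\,\|H_T-T\nabla f\|_\infty$, and finally squares and substitutes $\cd^2=1-\delta\sd^2$. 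That route is shorter, invokes the divergence theorem exactly once, and avoids the second averaging argument entirely; your route buys nothing extra here since the sup/inf bounds you apply at the end discard precisely the integral refinement that Lemma \ref{lem2} provides. As for the obstacle you flag --- that Lemma \ref{lem2} (and the underlying Lemma \ref{lem1}) is stated for closed manifolds while $\Omega$ has boundary, so that a term $\int_M\langle TX^{\top},\nu\rangle\widetilde{\mu}_f$ should in principle appear --- this is a legitimate concern, but it is not specific to your approach: the paper's own proof simply applies Lemma \ref{lem1}(2) ``on $\Omega$'' without addressing this boundary contribution, as it does throughout Section 4. So your proof is not at a disadvantage on that point, but note that as written you leave that transfer unverified, and your route multiplies the number of integration-by-parts steps that would need checking, whereas the paper's direct argument requires controlling only the single boundary term above.
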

\begin{proof}
From the second part of Lemma \ref{lem1} applied on $\Omega$, we have
$$\int_{\Omega}\cd(r)\trace(T)\mu_f\leqslant-\int_{\Omega}\langle X,H_T-T\nabla f\rangle\mu_f.$$
We recall that $X=\sd(r)\nabla r$, which gives
$$\int_{\Omega}\cd(r)\trace(T)\mu_f\leqslant\int_{\Omega}\|H_T-T\nabla f\|\sd(r)\mu_f.$$
We are in the case where $\delta>0$, so $\cd$ and $\sd$ are respectively decreasing and increasing on $[0,\frac{\pi}{2\sqrt{\delta}}]$. Moreover, $\Omega$ is contained in a ball of radius $R<\frac{\pi}{4\sqrt{\delta}}$ which implies that $\Omega$ is contained in a ball of radius $\frac{\pi}{2\sqrt{\delta}}$ centered at $p$. Hence, $r<R$ and so $cd(r)\geqslant \cd(R)$ and $\sd(r)\leqslant\sd(R)$ on $\Omega$. Thus, we get
$$\cd(R)\inf_{\Omega}(\trace(T))\leqslant \sd(R)\sup_{\Omega}\|H_T-T\nabla f\|.$$
We deduce easily from this and the fact that $\cd^2+\delta\sd^2=1$ that
$$\sd^2(R)\left(\dfrac{\|H_T-T\nabla f\|_{\infty}^2}{\displaystyle\inf_{\Omega}(\trace(T))^2}+\delta \right)\geqslant 1,$$
which concludes the proof of the proposition.
\end{proof}

Now, using the above proposition together with the estimate of Theorem \ref{thm2} in the case $\delta>0$, we get the following estimate
\beQ\sigma_{1}&\leqslant& \left(\dfrac{\|H_T-T\nabla f\|_{\infty}^2}{\displaystyle\inf_{\Omega}(\trace(T))^2}+\delta \right)\left(\delta  +\frac{ \displaystyle\int_{M}\|H_S-S(\nabla f)\|^2\widetilde{\mu}_f}{V_f(M)\inf\left(\trace(S)^2\right)}\right)\frac{\displaystyle \int_{\Omega}\trace(T)\mu_{f}}{V_f(M)}\sd^2(R)\\
&\leqslant&\displaystyle\sup_{\Omega}(\trace(T))\left(\dfrac{\|H_T-T\nabla f\|_{\infty}^2}{\displaystyle\inf_{\Omega}(\trace(T))^2}+\delta \right)\left(\delta  +\frac{ \displaystyle\int_{M}\|H_S-S(\nabla f)\|^2\widetilde{\mu}_f}{V_f(M)\inf\left(\trace(S)^2\right)}\right)\frac{\displaystyle V_f(\Omega)}{V_f(M)}\sd^2(R)
\eeQ
which is comparable to the estimate for the case $\delta\leqslant 0$:

\begin{eqnarray*}
\sigma_{1}\leqslant \sup_{\Omega}\left[ \delta\trace(T)+\sup_{\Omega}\left( \frac{\|H_T-T(\overline{\nabla} f)\|}{\trace(T)}\right)\|H_T-T(\overline{\nabla} f)\|\right]\\
\times\left[\delta+\dfrac{\sup_M\|H_S-S(\nabla f)\|^2}{\inf_M(\trace(S))^2}\right]\frac{V_f(\Omega)}{V_f(M)}\sd^2(R).
\end{eqnarray*}
\section{Data Availability Statement}
Data sharing not applicable to this article as no datasets were generated or analysed during the current study.

\end{document}